\theoremstyle{plain}
\newtheorem{thm}{Theorem}[section]
\newtheorem{prop}[thm]{Proposition}
\newtheorem{cor}[thm]{Corollary}
\newtheorem{lemma}[thm]{Lemma}
\theoremstyle{definition}
\newtheorem{defn}{Definition}[section]
\theoremstyle{remark}
\newcommand{\abs}[1]{\left\lvert #1 \right\rvert}
\newcommand{\inp}[2]{\left\langle #1,#2 \right\rangle}
\newcommand{\norm}[1]{\left\| #1 \right\|}
\newcommand{\cl}[1]{\overline{#1}}
\newcommand{\cal}[1]{\mathcal{#1}}
\DeclareMathOperator{\linspan}{span}
\newcommand{\wk}{\text{weak$^\ast$}}
\DeclareMathOperator*{\mult}{mult}
\DeclareMathOperator{\trace}{trace}
\newcommand{\calA}{\mathcal{A}}
\newcommand{\calM}{\mathcal{M}}
\newcommand{\calF}{\mathcal{F}}
\newcommand{\calI}{\mathcal{I}}
\newcommand{\bbC}{\mathbb{C}}
\newcommand{\bbD}{\mathbb{D}}
\newcommand{\bbN}{\mathbb{N}}
\newcommand{\bbR}{\mathbb{R}}
\newcommand{\bbT}{\mathbb{T}}
\newcommand{\alg}[1]{\mathcal{#1}}
\newcommand{\hs}[1]{\mathcal{#1}}
\newcommand{\bh}[1]{B(\hs #1)}
\newcommand{\tc}[1]{TC(\hs #1)}
\newcommand{\hilbs}[1]{HS(\hs #1)}
\newcommand{\m}[1][]{\mult}
\newcommand{\mh}[1]{\mult(#1)}
\newcommand{\mhk}[2]{\mult(#1,#2)}
\newcommand{\finset}[2][n]{\{#2_1,\ldots,#2_#1\}}
\newcommand{\finseq}[2][n]{(#2_1,\ldots,#2_#1)}
\begin{document}

\title[Duality, Tangential Interpolation and T\"oplitz Corona
Problems]{Duality, Tangential Interpolation, \\ and
  T\"oplitz Corona Problems.}

\author[M. Raghupathi]{Mrinal Raghupathi$^{\dagger}$} \date{\today}

\address{Department of Mathematics, Vanderbilt University \\
  Nashville, Tennessee, 37240 USA}

\email{mrinal.raghupathi@vanderbilt.edu}

\urladdr{http://www.math.vanderbilt.edu/~mrinalr} \thanks{$\dagger$ The first
  author was supported in part by a National Science Foundation Young
  Investigator Award, Workshop in Analysis and Probability, Texas
  A{\&}M University}

\author[B. D. Wick]{Brett D.~Wick$^{\ddagger}$} 

\address{School of Mathematics, Georgia Institute of Technology\\ 686
  Cherry Street\\ Atlanta, GA 30332-1060 USA}

\email{wick@math.gatech.edu} 

\urladdr{http://people.math.gatech.edu/~bwick6/} \thanks{$\ddagger$ The second
  author is supported by National Science Foundation CAREER Award
  DMS\# 0955432 and an Alexander von Humboldt Fellowship.}

\subjclass[2000]{Primary 47A57; Secondary 30H30, 30H50}

\keywords{Nevanlinna-Pick interpolation, Toeplitz Corona Problem,
  Distance Formulae, Hilbert Module}

\begin{abstract}
  In this paper we extend a method of Arveson~\cite{Arveson} and
  McCullough~\cite{Mc} to prove a tangential interpolation theorem for
  subalgebras of $H^\infty$. This tangential interpolation result
  implies a T\"oplitz corona theorem. In particular, it is shown that
  the set of matrix positivity conditions is indexed by cyclic
  subspaces, which is analogous to the results obtained for the ball
  and the polydisk algebra by Trent-Wick~\cite{TW} and
  Douglas-Sarkar~\cite{DS}.
\end{abstract}

\maketitle

\section{Introduction}
The classical corona problem asks whether the set of point
evaluations, for points in the unit disk $\bbD$, is dense in the
maximal ideal space of $H^\infty$. A famous result of
Carleson~\cite{C} shows that they are dense. Let $\cal A$ be an
abelian Banach algebra, and let $M$ be its maximal ideal space. A
subset $X$ of $M$ is dense in $M$ if and only if for any finite set of
functions $f_1,\ldots,f_n$ such that $\sum_{j=1}^n \abs{f_j(x)}^2 \geq
\delta^2>0$ for $x\in X$, there exists a set $g_1,\ldots,g_n\in \alg
A$ such that $f_1g_1+\cdots+f_ng_n = 1$.

Arveson, \cite{Arveson}, studied a related problem replacing the
condition $\sum_{j=1}^n \abs{f_j(x)}^2 \geq \delta^2$, by the operator
theoretic condition $\sum_{j=1}^n T_{f_j}T_{f_j}^* \geq \delta^2I$,
where $T_f$ is the T\"oplitz operator with symbol $f$ acting on the
Hardy space $H^2$. He showed that under this assumption there exists
$g_1,\ldots,g_n\in H^\infty$ such that $\sum_{j=1}^n f_j g_j =1$ and
$\sum_{j=1}^n \abs{g_j(z)}^2 \leq \delta^{-2}$. The constant
$\delta^{-2}$ is optimal, as demonstrated by the choice $f_1 = 1$.
See Schubert for the best possible constant \cite{Sc}.

In general determining the best constants in the corona problem is
considerably challenging. For the T\"oplitz corona problem we
do obtain the optimal constant. However, we make stronger assumptions.

The objective of this paper is to show that a similar T\"oplitz corona
theorem holds for the case where $\alg A$ is \wk-closed subalgebra of
$H^\infty$. Our result makes use of a modification of the Arveson
distance formula~\cite[Theorem 1]{Arveson}, a refinement of this due
to McCullough~\cite[Theorem 1]{Mc}.  These modifications allow us to
then demonstrate the first main result of this paper, a tangential
interpolation theorem for unital \wk-closed algebras $\alg A$.

\subsection{Notation}
We denote by $L^p$ the Lebesgue space on the unit circle with
respect to normalized arc-length measure. The corresponding Hardy
space will be denoted $H^p$.

Given a subset $\cal S$ of a Hilbert space $\hs H$, we denote by
$[\cal S]$ the smallest closed subspace that contains $\cal S$.

A function $g\in H^2$ is called outer if the closure of $H^\infty g$
is $H^2$. In this paper we adopt the following notation.  Given a
subalgebra $\alg A\subseteq H^\infty$ and an outer function $g$ we let
$K_g$ be the reproducing kernel of the subspace $[\alg A g]$, viewed
as a subspace of $H^2$.

\subsection{Statement of main results}
This paper is concerned primarily with tangential interpolation
theorems and their application to T\"oplitz corona problems. We would
like to give an overview of the main results. We begin by stating the
tangential interpolation problem and our main result, which is
Theorem~\ref{interp}. In Section~\ref{tangent} we will elaborate on
the connections between the two problems.

Let $\alg A$ be a unital, \wk-closed subalgebra of $H^\infty$. Let
$(x_1,\ldots,x_n)$ be a sequence of points in the unit disk $\bbD$,
let $(v_1,\ldots,v_n)$ be a sequence of vectors in $\ell^2$ and let
$(w_1,\ldots,w_n)$ be a sequence of scalars.  We identify a
function $F:\bbD\to \ell^2$ with a sequence of functions
$(f_k)_{k=1}^\infty$ in the usual way. Let $F:\bbD \to \ell^2$ be such
that $f_k\in \alg A$ for all $k\geq 1$. The function $F$ induces an
operator $M_F : H^2\to H^2\otimes \ell^2$ given by $M_F(h) =
Fh$. Hence, $M_F$ can be identified with the column operator
$(T_{f_1},\ldots)^t$. Similarly, there is a map from $H^2\otimes
\ell^2\to H^2$ given by $M_F(h_k) = \sum_{k=1}^\infty f_k h_k$. In
this case, the operator $M_F$ is identified with the row operator
$(T_{f_1},\ldots)$. We denote by $C(\alg A)$ the set of $F$ such that
$f_k\in \alg A$, for $k\geq 1$, viewed as column operators. When viewed
as row operators we use the notation $R(\alg A)$. In both instances
the norm of $M_F$, as an operator, coincides with $\sup_{z\in \bbD}
\norm{F(z)}_{\ell^2}$.

We are concerned with the following extremal problem
\[
\inf \left\{\sup_{z\in \bbD} \norm{F(z)}_{\ell^2}\,:\, \inp{v_j}{F(x_j)} =
\cl{w_j}\text{ for }j=1,\ldots,n\right\}.
\]
We say that a function $F$ such that $\inp{v_j}{F(x_j)} = \cl{w_j}$
for $j=1,\ldots,n$ is a solution to the tangential interpolation
problem.

Our main result is a characterization, in terms of matrix positivity
conditions, for the existence of a solution $F\in C(\cal A)$ such that
$\sup_{z\in \bbD} \norm{F(z)}_{\ell^2} \leq \alpha$, where $\alpha$ is
a prescribed constant.
\begin{thm}\label{interp}
  Let $\alg A$ be a unital \wk-closed subalgebra of $H^\infty$. Let
  $(x_1,\ldots,x_n)$ be a sequence of points from the unit disk
  $\bbD$, let $(v_1,\ldots,v_n)$ be a sequence of $\ell^2$ vectors,
  and let $(w_1,\ldots,w_n)$ be a sequence of scalars. Let $Q_g$
  denote the matrix
\begin{equation}
Q_g = \left[(\alpha^2 \inp{v_j}{v_i} - w_i\cl{w_j} )K_g(x_i,x_j)\right].
\end{equation}
Then there exists a function $F :\bbD \to \ell^2$ such that
$\sup_{z\in \bbD} \norm{F(z)}_{\ell^2}\leq \alpha$ and
$\inp{v_j}{F(x_j)} = \cl{w_j}$ if and only if $Q_g\geq 0$ for all
outer functions $g$ such that $\norm{g}_2 = 1$.
\end{thm}

A more careful examination of the proof of Theorem~\ref{interp}, which
will be given in Section~\ref{maintangent}, shows that we need
only consider a subset of the set of all outer functions.

Before we state this corollary we introduce some notation.  Given
$\calA$ we let $L^\infty(\calA)$ denote the smallest \wk-closed
subalgebra of $L^\infty$ that contains $\calA + \cl{\calA}$. The
algebra $L^\infty(\calA)$ is the algebra of essentially-bounded
measurable functions for some sub-sigma-algebra of the Lebesgue
measurable sets on the circle. Therefore, there exists a sigma-algebra
$\calM$ consisting of Lebesgue measurable subsets of $\bbT$ such that
$L^\infty(\calA) = L^\infty(\bbT, \calM,dm)$, where $m$ is Lebesgue
measure. We let $L^p(\cal A)$ denote the corresponding $L^p$ space.

If $g$ is an outer function, then we denote $\hs H_g = [\alg A
g]$. Recall that the kernel function for this space is $K_g$. When
$g=1$ we denote $\hs H_g$ by $\hs H$ and the corresponding kernel is
denoted $K$.
\begin{cor}\label{interpcor}
Retaining the notation of Theorem~\ref{interp}. 
\begin{enumerate}
\item There exists a function $F\in C(\alg A)$ such that $\sup_{z\in
    \bbD}\norm{F(z)}_{\ell^2} \leq \alpha$ and $\inp{v_j}{F(x_j)} =
  \cl{w_j}$ for $j=1,\ldots, n$ if and only if $Q_g\geq 0$ for all
  outer functions $g$ such that $\abs{g}\in L^2(\alg A)$ and
  $\norm{g}_2 = 1$.
\item For $F\in R(\cal A)$ let $M_{F,g} \in B(\hs H_g\otimes
  \ell^2,\hs H_g)$ be given by $h \to Fh$. If there is a constant
  $\delta>0$ such that $M_{F,g}M_{F,g}^*\geq \delta^2 I$ for all outer
  functions $g$ such that $\abs{g}\in L^2(\calA)$ and $\norm{g}_2=1$,
  then there exists a function $G$ in $C(\calA)$ such that $FG = 1$
  and $\sup_{z\in\bbD}\norm{G(z)} \leq \delta^{-1}$.
\end{enumerate}
\end{cor}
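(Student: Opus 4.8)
The plan is to derive both parts from Theorem~\ref{interp}. For part~(1) the ``only if'' direction is immediate: a function $F\in C(\calA)$ satisfying the norm bound and the interpolation conditions is an admissible competitor in Theorem~\ref{interp}, so $Q_g\ge 0$ for \emph{every} normalized outer $g$, in particular for those with $\abs{g}\in L^2(\calA)$. For ``if'' I would show that positivity of $Q_g$ over the smaller family already forces it over the full family, after which Theorem~\ref{interp} applies verbatim. Fix an arbitrary outer $g$ with $\norm{g}_2=1$ and let $G$ be the outer function whose boundary modulus satisfies $\abs{G}^2=\bb{E}(\abs{g}^2\mid\calM)$, the conditional expectation of $\abs{g}^2$ onto the sub-$\sigma$-algebra $\calM$ of the paper, for which $L^\infty(\bbT,\calM,dm)=L^\infty(\calA)$. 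A standard application of Jensen's inequality for conditional expectations, using $\log\abs{g}\in L^1$, gives $\log\abs{G}^2\in L^1$, so $G$ exists, $\norm{G}_2=1$, and $\abs{G}\in L^2(\calA)$. Every $a\in\calA$ is $\calM$-measurable, so $\int\abs{a}^2\abs{g}^2\,dm=\int\abs{a}^2\abs{G}^2\,dm$; hence $a\mapsto a$ extends to a unitary from the closure of $\calA$ in $L^2(\abs{g}^2\,dm)$ onto its closure in $L^2(\abs{G}^2\,dm)$ intertwining the evaluation functionals at points of $\bbD$. Transporting this through the isometry $\phi\mapsto\phi/g$ from $[\calA g]$ onto the closure of $\calA$ in $L^2(\abs{g}^2\,dm)$ (and its analogue for $G$), one obtains $K_g(x_i,x_j)=\lambda_i\overline{\lambda_j}\,K_G(x_i,x_j)$ with $\lambda_i=g(x_i)/G(x_i)\ne 0$ (outer functions are zero-free on $\bbD$). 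Thus $Q_g=D Q_G D^*$ for the invertible diagonal matrix $D=\Diag(\lambda_1,\ldots,\lambda_n)$, so $Q_g\ge 0\iff Q_G\ge 0$; this is the reduction needed, and it identifies the subset of outer functions referred to in the remark following Theorem~\ref{interp}.

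For part~(2) I would recast $FG=1$ as a tangential interpolation problem on finite point sets. Fix $\{x_1,\ldots,x_n\}\subseteq\bbD$ and apply part~(1) with $\alpha=\delta^{-1}$, $v_j=\overline{F(x_j)}\in\ell^2$ (coordinatewise conjugate), and $w_j=1$: a solution $G^{(n)}\in C(\calA)$ then satisfies $\sum_k f_k(x_j)g^{(n)}_k(x_j)=1$ for $j=1,\ldots,n$ and $\sup_z\norm{G^{(n)}(z)}\le\delta^{-1}$. The hypothesis supplies the required positivity: since $T_{f_k}^*k^g_x=\overline{f_k(x)}\,k^g_x$ on $\hs H_g$, one has $M_{F,g}^*k^g_x=\overline{F(x)}\otimes k^g_x$, so compressing $M_{F,g}M_{F,g}^*\ge\delta^2 I$ to $\linspan\{k^g_{x_1},\ldots,k^g_{x_n}\}$ yields $\bigl[\inp{v_j}{v_i}K_g(x_i,x_j)\bigr]\ge\delta^2\bigl[K_g(x_i,x_j)\bigr]$, i.e.\ $Q_g\ge 0$, for every outer $g$ with $\abs{g}\in L^2(\calA)$ and $\norm{g}_2=1$.

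It remains to pass from finite point sets to all of $\bbD$. Choosing an increasing sequence of finite sets with dense union and the corresponding $G^{(n)}\in C(\calA)$, the column operators $M_{G^{(n)}}$ lie in the $\delta^{-1}$-ball of the \wk-closed subspace $C(\calA)\subseteq B(H^2,H^2\otimes\ell^2)$, which is \wk-metrizable; passing to a \wk-convergent subsequence produces $G\in C(\calA)$ with $\sup_z\norm{G(z)}\le\delta^{-1}$. Point evaluation at any $z\in\bbD$ is \wk-continuous, so $g^{(n)}_k(z)\to g_k(z)$ for each $k$; combined with the uniform bounds $\sum_k\abs{g^{(n)}_k(z)}^2\le\delta^{-2}$ and $\sum_k\abs{f_k(z)}^2=\norm{F(z)}^2<\infty$, a Cauchy--Schwarz tail estimate shows $\sum_k f_k(z)g^{(n)}_k(z)\to\sum_k f_k(z)g_k(z)$. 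Since this sum equals $1$ at each $x_j$ for all large $n$ and $z\mapsto(FG)(z)$ is holomorphic, we conclude $FG\equiv 1$ on $\bbD$.

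The main obstacle is the reduction in part~(1): recognizing that the moduli of the relevant outer functions may be taken in $L^2(\calA)$ and establishing the diagonal-congruence identity relating $K_g$ and $K_G$ through the conditional-expectation unitary. Once that is secured, part~(2) is the standard interpolation-to-corona argument together with a routine normal-families passage to the limit; the only points there requiring care are the \wk-closedness and metrizability of $C(\calA)$ and the tail estimate that keeps the infinite Bezout sum under control.
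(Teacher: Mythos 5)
Your proposal is correct, but for part (1) it takes a genuinely different route from the paper. The paper does not argue forward from the statement of Theorem~\ref{interp}; it re-inspects its proof: the only outer functions that ever enter the distance formula are those with $\abs{g}^2=\sum_i\abs{h_i}^2$ for $h=\oplus h_i\in\hs{H}\otimes\ell^2$, $\hs{H}=[\calA]$, and since $\norm{\abs{f_m}^2-\abs{h}^2}_1\le\norm{f_m-h}_2\norm{f_m+h}_2$ for $f_m\in\calA$ approximating $h$, each such $g$ automatically has $\abs{g}\in L^2(\calA)$ --- so the restricted family of positivity conditions is all the theorem ever uses. You instead treat Theorem~\ref{interp} as a black box and prove the a priori equivalence of the two families: for arbitrary outer $g$ you construct $G$ with $\abs{G}^2=\bb{E}(\abs{g}^2\mid\calM)$ (Jensen guaranteeing $\log\abs{G}^2\in L^1$), observe that $ag\mapsto aG$ is a unitary $U:\hs{H}_g\to\hs{H}_G$ intertwining multiplication by $\calA$, compute $U^*k_x^G=\overline{G(x)/g(x)}\,k_x^g$, and conclude $Q_g=DQ_GD^*$ with $D=\Diag(g(x_i)/G(x_i))$ invertible. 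This is correct and in fact yields slightly more than the paper states (the two families of matrix conditions are equivalent for each fixed data set, not merely both sufficient), at the cost of the conditional-expectation machinery the paper avoids by working inside its own proof. For part (2) you are essentially reproving the paper's Proposition~\ref{tangtop}; the one substantive difference is that the paper's net over all finite subsets of $\bbD$ gives $FG=\delta$ at every point directly, whereas your increasing sequence of finite sets gives $FG=1$ only on a dense set, so you correctly need the additional Cauchy--Schwarz tail estimate and the holomorphy of $FG$ to finish --- a minor extra step, not a gap.
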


The difficulty with Theorem \ref{interp} is that the positivity
condition is over a whole family of outer functions or kernels. In
some applications we would rather have the condition over just
a single kernel. We turn to establishing a tangential
interpolation theorem where we replace the family of conditions
$Q_g\geq 0$ for all outer functions $g$ such that $\abs{g}\in L^2(\cal
A)$ by a single positivity condition. However, we can not guarantee a
solution of optimal norm.  This leads to the second main result of the
paper.

\begin{thm}
\label{onegenerator}
  Let $\cal A$ be a unital \wk-closed subalgebra of $H^\infty$, and
  let $\hs H_g$ be the subspace generated by $\cal A g$, where $g$ is
  an outer function.  Suppose that
  for each outer function $g$ such that $\abs{g}\in L^2(\calA)$ there
  exists a similarity $S_g : \hs H \to \hs H_g$. Also assume that
  there is a constant $c$, that is independent of $g$, such that
  $\norm{S_g}\|S_g^{-1}\|\leq c$ for all such outer functions $g$.
\begin{enumerate}
\item If $[(\alpha^2\inp{v_j}{v_i}-w_i\cl{w_j})K(x_i,x_j)]\geq 0$,
  then there exists $F \in C(\alg A)$ such that $\sup_{z\in
    \bbD}\norm{F(z)}_{\ell^2} \leq \alpha c$ and $\inp{v_j}{F(x_j)} =
  \cl{w_j}$ for $j=1,\ldots,n$.
\item If $M_FM_F^*\geq \delta^2$ on $B(\hs H)$, then there exists
  $G\in C(\cal A)$ such that $FG = 1$, and $\norm{G}_{C(\calA)}\leq
  c\delta^{-1}$.
\end{enumerate}
\end{thm}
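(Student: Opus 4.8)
The plan is to reduce everything to Theorem~\ref{interp} (equivalently Corollary~\ref{interpcor}) by comparing the kernels $K_g$ with the single kernel $K$ via the similarities $S_g$. The key observation is that if $S_g : \hs H \to \hs H_g$ is a similarity, then $S_g^* : \hs H_g \to \hs H$ is bounded and invertible, and composing reproducing-kernel identities shows that the kernel $K_g$ is, up to the bounded factors $S_g, S_g^{-1}$, comparable to $K$. More precisely, one should show that the existence of a similarity $S_g$ gives a two-sided estimate between the Gram matrices $[K_g(x_i,x_j)]$ and $[K(x_i,x_j)]$ of the form $c^{-2} [K(x_i,x_j)] \le [K_g(x_i,x_j)] \le c^2 [K(x_i,x_j)]$ after conjugating by a suitable invertible matrix built from $S_g$ acting on the span of the kernel functions $\{K_g(\cdot,x_j)\}$; the uniform bound $\norm{S_g}\norm{S_g^{-1}} \le c$ makes this estimate independent of $g$. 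For part (1), I would first establish the following: if the single matrix $[(\alpha^2\inp{v_j}{v_i} - w_i\cl{w_j})K(x_i,x_j)] \ge 0$ holds, then $Q_g \ge 0$ for all relevant outer $g$ with $\alpha$ replaced by $\alpha c$; then Corollary~\ref{interpcor}(1) applied with the constant $\alpha c$ produces the desired $F \in C(\calA)$ with $\sup_{z}\norm{F(z)}_{\ell^2} \le \alpha c$ and the tangential conditions satisfied.

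The comparison step itself goes as follows. Write $k_j = K(\cdot, x_j) \in \hs H$ and $k_j^g = K_g(\cdot, x_j) \in \hs H_g$. For any $f \in \calA$ (or in $\hs H$, $\hs H_g$) one has $\inp{f}{k_j} = f(x_j)$, and similarly in $\hs H_g$. The operator $S_g^*$ sends $k_j^g$ to $\inp{\,\cdot\,}{k_j^g} \circ S_g$-representing vector, and since $S_g$ intertwines the function-space structures (both $\hs H$ and $\hs H_g$ consist of analytic functions on $\bbD$ with point evaluations), $S_g^* k_j^g$ is a scalar multiple of $k_j$ plus lower-order corrections — the cleanest route is to argue at the level of the positive matrices directly. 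Given a vector $\xi = (\xi_1,\ldots,\xi_n) \in \bbC^n$, set $h = \sum_j \cl{\xi_j}\, k_j \in \hs H$ and $h_g = \sum_j \cl{\xi_j}\, k_j^g \in \hs H_g$; then $\sum_{i,j} \xi_i\cl{\xi_j}K(x_i,x_j) = \norm{h}^2$ and similarly with $g$. Using $S_g$ one gets $\norm{h_g}^2 \le \norm{S_g}^2 \norm{(S_g^{-1} S_g) h}^2$-type bounds once one checks that $S_g$ and $S_g^{-1}$ respect the span of kernel functions; the clean statement to extract is that the $n\times n$ positive matrices $[K(x_i,x_j)]$ and $[K_g(x_i,x_j)]$ are boundedly equivalent with constants depending only on $\norm{S_g}\norm{S_g^{-1}} \le c$. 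Multiplying entrywise (Schur product) by the fixed positive semidefinite matrix $[\alpha^2\inp{v_j}{v_i} - w_i\cl{w_j}]$ — or rather handling the indefinite matrix $[\alpha^2\inp{v_j}{v_i} - w_i\cl{w_j}]$ carefully, since it need not be positive — and absorbing the constant $c^2$ into $\alpha \mapsto \alpha c$ yields $Q_g \ge 0$.

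Part (2) follows from part (1) by the same duality/linearization that converts Theorem~\ref{interp} into Corollary~\ref{interpcor}(2): a Toeplitz-corona hypothesis $M_F M_F^* \ge \delta^2 I$ on $\bh{H}$ is exactly a tangential interpolation condition (interpolating the value $1$ at a single ``generic'' point encoded by an arbitrary vector), so the matrix $[(\delta^{-2}\inp{v_j}{v_i} - w_i\cl{w_j})K(x_i,x_j)] \ge 0$ hypothesis is met with $\alpha = \delta^{-1}$, and part (1) then produces $G \in C(\calA)$ with $FG = 1$ and $\norm{G}_{C(\calA)} \le c\delta^{-1}$. I expect the main obstacle to be the comparison estimate between $[K_g(x_i,x_j)]$ and $[K(x_i,x_j)]$: one must verify carefully that a Hilbert-space similarity $S_g : \hs H \to \hs H_g$ — which a priori is just a bounded invertible linear map — actually interacts correctly with the point-evaluation functionals, i.e. that it can be taken to be (or replaced by) a map compatible with the multiplier actions, so that the kernel comparison is legitimate. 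Handling the indefiniteness of the ``data matrix'' $[\alpha^2\inp{v_j}{v_i} - w_i\cl{w_j}]$ under this conjugation, rather than a naive Schur-product argument, is the technical heart of the matter.
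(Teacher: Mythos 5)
Your top-level strategy --- transfer the single positivity condition for $K$ to the family of conditions $Q_g\geq 0$ at the cost of replacing $\alpha$ by $\alpha c$, then invoke Theorem~\ref{interp} --- is exactly the paper's, and your treatment of part (2) (run the T\"oplitz corona reduction of Proposition~\ref{tangtop} through the tangential result of part (1)) is also what the paper does. But the transfer step, which you yourself call ``the technical heart of the matter,'' is left open, and the route you sketch would not close it. A two-sided equivalence of the Gram matrices $[K(x_i,x_j)]$ and $[K_g(x_i,x_j)]$ followed by a Schur product with $[\alpha^2\inp{v_j}{v_i}-w_i\cl{w_j}]$ fails precisely because that data matrix is indefinite: Schur multiplication by an indefinite matrix does not preserve the order, and you acknowledge this without supplying a substitute. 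Likewise your worry about whether $S_g$ ``interacts correctly with the point-evaluation functionals'' is the right worry, but it must be resolved rather than flagged: for an arbitrary bounded invertible $S_g$ the hypothesis is vacuous (a unitary between the two separable Hilbert spaces always exists, with $c=1$), so ``similarity'' has to be read as a similarity of $\calA$-modules, i.e.\ $S_gM_f=M_fS_g$ for all $f\in\calA$ --- this is the point of the section title ``Similar Cyclic Modules.''

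The paper closes both gaps at once in the discussion preceding its proof. The intertwining relation gives $M_f^*S_g^*k^g_x=\cl{f(x)}S_g^*k^g_x$ for all $f\in\calA$, so $S_g^*k^g_x$ lies in the one-dimensional orthogonal complement of $[\calI_x]$ in $\hs H$; hence $S_g^*k^g_x=\cl{\phi(x)}k_x$ for a nonvanishing multiplier $\phi$ with $S_g=M_\phi$. The indefiniteness is then sidestepped by reformulating positivity as an operator-norm bound: $[(\alpha^2\inp{v_j}{v_i}-w_i\cl{w_j})K(x_i,x_j)]\geq 0$ if and only if the finite-rank map $A$ sending $k_{x_i}\otimes v_i$ to $\cl{w_i}k_{x_i}$ has norm at most $\alpha$. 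Conjugating $A$ by $S_g^*$ and $S_g^*\otimes I$ --- under which the diagonal factors $\cl{\phi(x_i)}$ cancel --- produces exactly the corresponding map built from the $k^g_{x_i}$, with norm at most $c\alpha$, which translates back into $Q_g\geq 0$ with $\alpha c$ in place of $\alpha$. That is the lemma your proposal is missing; once it is in place, the rest of your argument goes through as written.
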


The outline of the paper is as follows.  In Section~\ref{tangent} we
give background to the tangential interpolation problem, including
standard background for reproducing kernel spaces.  In Section
\ref{distance} we provide an extension of Arveson's Distance formula
needed in our context.  Section \ref{maintangent} puts the
computations and ideas from the first two sections together to prove
Theorem \ref{interp}.  Finally, in Section \ref{similar} we prove
Theorem \ref{onegenerator}, which essentially follows from Theorem
\ref{interp}, and then collect applications of Theorem
\ref{onegenerator} to the case of bounded analytic functions on
Riemann surfaces. This application generalizes a result of Ball~\cite{B}.

The corona problem and its variant the T\"oplitz corona problem have
been studied extensively in the past. The paper of
Agler-McCarthy~\cite[Section 7]{aglmc2} provides an excellent overview
of the connection between matrix positivity conditions, families of
kernels and corona problems. The connections between interpolation
theory and To\"eplitz corona problem for the bidisk and the
Schur-Agler class are described in Agler-McCarthy~\cite{aglmc} and
Ball-Trent~\cite{BT}.

\section{The tangential interpolation problem}\label{tangent}
In order to state our results and describe our setting we will need
the terminology of reproducing kernel Hilbert spaces. We begin with a
brief description. The reader should consult the text of Agler and
McCarthy~\cite{aglmcbook}, or the paper of Aronszajn \cite{Ar}.

\subsection{Reproducing kernel Hilbert spaces}
Let $X$ be a set and let $\hs C$ be a Hilbert space. We denote by $
\calF(X,\hs C)$ the set of functions from $X$ to $\hs C$. A subset $\hs H
\subseteq \calF(X,\hs{C})$ is called a $\hs C$-valued reproducing kernel
Hilbert space (RKHS) if $\hs H$ is a Hilbert space and for each $x\in X$, the
evaluation map $E_x: \hs{H} \to \hs C$ defined by $f\mapsto f(x)$ is a
bounded linear map on $\hs H$. The kernel function of $\hs H$
is the map $K:X\times X\to B(\hs C)$ defined by $K(x,y) = E_xE_y^*\in
B(\hs C)$. It is straightforward that the kernel function of $\hs H$
is a positive semidefinite function on $X\times X$ and that the span
of $\{E_x\xi\,:\, x\in X,\xi\in \hs C\}$ is dense in $\hs H$.

Conversely, every $B(\hs C)$-valued positive semidefinite function $K$
on $X\times X$ gives rise to a $\hs C$-valued RKHS $\hs H(K)$ in a canonical way and this correspondence is
one-to-one~\cite{aglmcbook}. 

We denote the Hilbert space associated to $K$ by $\hs H(K)$. We
suppress the kernel function, when the context is clear.

For $i=1,2$, let $K_i$ be a $\hs C_i$-valued kernel function on $X$
and let $\hs H_i = \hs H(K_i)$. Given a function $F:X \to \calF(\hs
C_1,\hs C_2)$ and a function $g:X\to \hs C_1$, let $Fg$ denote the
pointwise product of $F$ and $g$. We say that $F:X\to \calF(\hs
C_1,\hs C_2)$ is a multiplier from $\hs H_1$ to $\hs H_2$ if and only
if $Fg\in \hs H_2$ for all $g\in \hs H_1$. We denote the set of
multipliers from $\hs H_1$ to $\hs H_2$ by $\mhk{\hs H_1}{\hs
  H_2}$. Since the space $\hs H_i$ is completely determined by its
kernel function $K_i$ we also use the notation $\mhk{K_1}{K_2}$ to
denote the space of multipliers.

The closed graph theorem shows that the operator $M_F:\hs H_1\to
\hs H_2$ defined by $M_F(g) = Fg$ is bounded. The multiplier norm
of a function $F\in \mhk{K_1}{K_2}$ is defined as $\norm{F}_{\mult(K_1,K_2)} :=
\norm{M_F}_{B(\hs H_1,\hs H_2)}$. 

If $E_{i,x}$ denotes the evaluation map on $\hs H_i$, and $F\in
\mult(K_1,K_2)$, then
\[E_{2,x} M_F = F(x)E_{1,x}\text{ for all }x\in X.\] 

If $\hs H$ is a scalar-valued RKHS, then the evaluation map
$E_x$ is a linear functional and the unique element $k_x\in \hs H$
such that $f(x) = E_x(f) = \inp{f}{k_x}$ for all $f\in \hs H$ is
called the kernel function at the point $x$ for $\hs H$. In this case $K(x,y) = E_xE_y^\ast = \inp{k_y}{k_x}$. 

Given two scalar-valued RKHSs $\hs H_i$, for $i=1,2$, with kernel
functions $K_i$, and $f\in \mhk{\hs H_1}{\hs H_2}$, we have
$M_f^*k_{2,x} = \cl{f(x)}k_{1,x}$, where $k_{i,x}$ denotes the kernel
function for $\hs H_i$ at the point $x$.

Given a scalar-valued RKHS $\hs H(K)$ we give the Hilbert space tensor
product $\hs H \otimes \hs C$ the structure of a $\hs C$-valued RKHS
by defining $E_x(f\otimes \xi) = f(x)\xi$. A short calculation reveals
that the kernel function for $\hs H\otimes \hs C$ is $K(x,y)I_{\hs
  C}$, where $I_{\hs C}$ is the identity map on $\hs C$, and that
$E_x^*\xi = k_x\otimes \xi$.

If $F\in \mhk{\hs H}{\hs H\otimes \hs C}$, then, for each $x\in  X$,
$F(x) \in B(\bbC,\hs C)$. We identify $B(\bbC, \hs C)$ with $\hs C$
via the correspondence $T\mapsto T(1)$. We have,
\begin{align*}
\inp{M_F^*(k_x\otimes \xi)}{h} & = \inp{k_x\otimes \xi}{Fh} = \inp{k_x}{h}F(x)^*\xi = \inp{\inp{\xi}{F(x)}k_x}{h}. 
\end{align*}
Therefore, 
\begin{equation}\label{mfstar}
M_F^*(k_x\otimes \xi) = (F(x)^*\xi) k_x = \inp{\xi}{F(x)}k_x.
\end{equation} 

Given $F\in \mhk{\hs H\otimes \hs C}{\hs H}$ we have $F(x)\in B(\hs
C,\bbC)$ and so $F(x)^* \in B(\bbC,\hs C) = \hs C$, under our
identification. In this case we have,
\begin{align*}
  \inp{h\otimes \xi}{M_F^* k_x}& = \inp{F(h\otimes \xi)}{k_x} \\
& = h(x)F(x)\xi = \inp{h}{k_x}\inp{\xi}{F(x)^*} \\
& = \inp{h\otimes \xi}{k_x\otimes F(x)^*}. 
\end{align*}
Hence, 
\begin{equation}
\label{mfstar2}
M_F^*k_x = k_x\otimes F(x)^*.
\end{equation}

In this paper we will be interested primarily in two special cases: the
case where $\hs C_1 = \hs C_2 = \bbC$, and the case where either $\hs
C_1$ or $\hs C_2$ is $\ell^2 := \ell^2(\bbN)$ and the other is $\bbC$. 

We can view an operator $T \in B(\hs H,\hs H\otimes \ell^2)$ as a column
operator matrix of the form $T = \begin{bmatrix} T_1 \\ T_2 \\
  \vdots \end{bmatrix}$. If $\alg A\subseteq B(\hs H)$, then we denote by
$C(\alg A)$ the set of column operators $[T_i]$ such that $T_i\in \alg A$ for
all $i$. There is a similar identification of $B(\hs H\otimes \ell^2, \hs H)$
with the set of row operator matrices, and we denote the set of row
operators with entries from $\alg A$ by $R(\alg A)$. It is easily checked that
$\mult(\hs H,\hs H\otimes \ell^2) = C(\mult(\hs H))$ and that
$\mult(\hs H\otimes\ell^2,\hs H) = R(\mult(\hs H))$.

\subsection{Tangential interpolation}
We now describe the tangential interpolation problem. Let $X$ be a set
and let $K$ be a kernel on $X$. We will assume that $K(x,x)\not = 0$,
for all $x\in X$.

Given a finite sequence of points $(x_1,\ldots,x_n)\in X^n$, a
sequence of vectors $(v_1,\ldots,v_n) \in \hs C^n$, and a sequence of
scalars $(w_1,\dots,w_n)$. We say that a function $F\in \mult(\hs H,
\hs{H}\otimes \hs{C})$ is a solution to the associated tangential
interpolation problem if and only if $F(x_j)^*v_j = \cl{w_j}$ for
$j=1,\ldots,n$.

Given a constant $\alpha$ we are interested in finding necessary and
sufficient conditions for the existence of a solution of norm at most
$\alpha$, that is, a multiplier $F$ such that $\norm{F}_{\mhk{\hs
    H}{\hs H\otimes \hs C}} \leq \alpha$ and
\[
F(x_j)^*v_j = \overline{w_j}\text{ for }j=1,\ldots,n.
\]

As is the case with many complex interpolation problems of this
type, there is a necessary matrix positivity condition, which we now
derive. Let $F$ be a solution to the above problem and let $\alpha \geq
\norm{F}_{\mhk{\hs H}{\hs H\otimes \hs C}}$. For $x,y\in X$ and
$\xi,\zeta\in \hs C$ we have,
\begin{align}\label{matpos4}
  \inp{M_FM_F^*(k_y\otimes\zeta)}{k_x\otimes \xi} & = \inp{M_F^*(k_y\otimes \zeta)}{M_F^*(k_x\otimes \xi)} \\
\notag  & = \inp{(F(y)^*\zeta)k_y}{(F(x)^*\xi)k_x} \\
\notag  & = (F(y)^*\zeta)(\overline{F(x)^*\xi})K(x,y)
\end{align}
Consider the restriction of $M_F^*$ to the subspace $\hs
K$, that is the span of the vectors $\{k_{x_1}\otimes
v_1,\ldots,k_{x_n}\otimes v_n\}$. An element of $\hs K$ is of the form
$ k = \sum_{j=1}^n c_j k_{x_j}\otimes v_j$. Since $M_F^*$ has norm at
most $\alpha$ we see that $\inp{(\alpha^2I - M_FM_F^*)k}{k}\geq
0$. Substituting for $k$, using~\eqref{matpos4} and the fact that
$F(x_j)^*v_j = \cl{w_j}$, shows us that $\norm{M_F^*|_{\hs K}}\leq
\alpha $ if and only if
\begin{equation}\label{matpos3}
[(\alpha^2\inp{v_j}{v_i} - w_i\cl{w_j})K(x_i,x_j)]\geq 0.
\end{equation}

If $\hs C = \bbC$, and $v_i = 1 \in \bbC$, then the above positivity
condition is a necessary condition for the existence of a function
$f\in \mult(K)$ such that $\norm{f}_{\mult(K)}\leq \alpha$ and $f(x_j)
= w_j$ for $j=1,\ldots,n$. If $K(z,w) = (1-z\cl{w})^{-1}$ is the
Szeg\"o kernel for the unit disk $\bbD$, then the associated
multiplier algebra is $H^\infty(\bbD)$ and the multiplier norm is the
supremum norm. This is the setting of the classical Nevanlinna-Pick
theorem. In this case, it is a well-known fact that the necessary matrix
positivity condition is also sufficient.

In general, a single matrix positivity condition is not sufficient to
guarantee that there exist solutions of norm at most a given constant
$\alpha$. However, in certain situations we may be able to replace a
single kernel function by a set of kernel functions $\{K_\lambda\,:\,
\lambda\in \Lambda\}$ such that $\mh{K_\lambda} = \mh{\hs H}$. In
addition, this collection of kernel functions has the property that the
condition $Q_\lambda = [(\alpha^2-w_i\cl{w_j})K_\lambda(x_ix_j)] \geq
0$ for all $\lambda\in\Lambda$ is equivalent to the existence of a
multiplier $f\in \mult(\hs H)$ such that $\norm{f}_{\mult(\hs H)}\leq
\alpha$ and $f(x_j) = w_j$. This is, in fact, the situation when we
replace the algebra $H^\infty(\bbD)$ by a \wk-closed subalgebra $\alg
A$ of $H^\infty(\bbD)$~\cite{Ra1}.

To make this formal, we introduce the following definition.

\begin{defn}
  Let $\hs H$ be a scalar-valued RKHS on a set $X$. A set of kernels
  $\{K_\lambda\,:\, \lambda\in \Lambda\}$ on $X$ such that
  $\mult(K_\lambda)\supseteq \mult(\hs H)$ has the \textit{tangential
    interpolation property} for $\mult(\hs H)$ if and only if for every finite sequence
  of points $\finseq{x}$ from $X$, vectors $\finseq{v}$ from $\hs C$,
  and scalars $\finseq{w}$ the condition
\begin{equation}\label{matpos}
Q_\lambda :=
[(\alpha^2\inp{v_j}{v_i}-w_i\cl{w_j})K_\lambda(x_i,x_j)]_{i,j=1}^n\geq 0
\text{ for all }\lambda\in\Lambda
\end{equation}
implies the existence of a multiplier $F\in \mult(\hs H,\hs H\otimes
\hs C)$ such that
$$
\norm{F}_{\mult(\hs H, \hs{H}\otimes \hs{C})}\leq\alpha
$$ 
and $F(x_j)^*v_j = \cl{w_j}$ for $j=1,\ldots,n$.
\end{defn}

\subsection{Reformulation as a distance problem}
Suppose that we are given the data of tangential interpolation
problem, that is, a sequence of vectors $\finseq{v}$ in $\ell^2$, a
sequence of points $\finseq{x}$ in $X$ and a sequence of scalars
$\finseq{w}$. Let us assume that there are solutions to
this problem, that is, multipliers $F\in \mult(\hs H,\hs H\otimes
\hs C)$ such that $F(x_j)^*v_j = \cl{w_j}$ for $j=1,\ldots,n$. Let $\cal
J$ denote the set of functions $G$ in $\mult(\hs H, \hs H\otimes
\hs C)$ such that $G(x_j)^*v_j =0$ for $j=1,\ldots,n$. Given two
solutions $F_1,F_2$ to the tangential interpolation problem we see
that the difference $F_1-F_2\in \cal J$. Conversely, every solution
must lie in $F+\cal J$. Hence, the set of solutions to the
interpolation problem is precisely $F+\cal J$, where $F$ is one
particular solution. 

We will show in Section~\ref{maintangent} that under the assumption
$K(x,x)\not =0$ for all $x\in X$, the multiplier algebra $\mult(K)$ is
a unital, \wk-closed subalgebra of $B(\hs H(K))$. We will also
establish the fact that the evaluation map on $\mult(K)$, given by
$f\mapsto f(x)$, is \wk-continuous.

Since the algebras we are interested in are $\wk$-closed the least
norm of any solution to the interpolation problem is given by
$\inf\{\norm{F+G}_{\mult(\hs H,\hs H\otimes \hs C)}\,:\, G\in \cal
J\}$. This is the distance from $F$ to $\cal J$, that is,
$d(F,\cal J)$. The problem of determining necessary and sufficient
conditions for the existence of a solution of norm at most $\alpha$ is
reduced to the problem of computing a formula for the distance
$d(F,\cal J)$. With this in mind we present a refinement of a
distance formula in~\cite{Arveson,Mc} in the next section.

We will show how to deduce a T\"oplitz corona theorem from a
tangential interpolation theorem in Section~\ref{maintangent}.

\section{A Distance Formula}
\label{distance}
In this section we recall some basic facts about the distance of an
operator $A\in B(\hs H_1,\hs H_2)$ from a $\wk$-closed subspace. In
the next section we will use this formula to compute the distance of a
solution to the subspace $\cal J$ and thus obtain a tangential
interpolation theorem. This a simple application of standard duality
techniques. 

We begin by recalling some basic facts related to the dual Banach
space structure of $\bh{H}$.  Let $\hs H$ be a separable Hilbert
space. Given an operator $T\in \bh{H}$, the trace of $T$ is defined
by
\[
\trace(T) = \sum_{j=1}^\infty \inp{Te_j}{e_j}
\]
where $\{e_j\,:\, j\in \mathbb{N}\}$ is an orthonormal basis. The sum
in the definition of the trace does not depend on the choice of
orthonormal basis. An operator is called a trace class operator if and
only if $\trace(T)$ is finite. The set of trace class operators is an
ideal in $\bh{H}$ and is a Banach space in the trace class norm
$\norm{T}_1 = \trace(\abs{T})$. We let $\tc{H}$ denote the space of
trace class operators on $\hs H$. It is well known that the dual of
$\tc{H}$ can be identified naturally with $\bh{H}$ and that the dual
pairing is given by
\[\inp{T}{A} = \trace(TA),\text{ where } A\in \bh{H}, T\in \tc{H}.\]
This pairing also identifies $\tc{H}$ with the set of \wk-continuous
linear functionals on $\bh{H}$.  An operator $H\in \bh{H}$ is called a
Hilbert-Schmidt operator if and only if $HH^*\in \tc{H}$, that is,
$\sum_{j=1}^\infty \inp{He_j}{He_j}$
is finite. The set of all Hilbert-Schmidt operators will be denoted
$\hilbs H$. This space is a Hilbert space when given the inner
product
$\inp{H}{K} = \trace(HK^*)$.
Given a trace class operator $T$ there exist Hilbert-Schmidt operators
$H,K$ such that $T = HK^*$ and $\norm{T}_1^{1/2} = \norm{H}_2=
\norm{K}_2$. Let $A \in B(\hs H)$, let $T = HK^*\in TC(\hs H)$,
let $h_j = He_j$ and let $k_j = Ke_j$. If $h = \oplus h_j$
and $k = \oplus k_j \in \hs H \otimes \ell^2$, then,
\begin{align*}
  \trace(AT) &= \trace(AHK^*) = \trace(K^*AH) \\
  & = \sum_{j=1}^\infty \inp{AHe_j}{Ke_j} = \sum_{j=1}^\infty
  \inp{Ah_j}{k_j} = \inp{(A\otimes I)h}{k}.
\end{align*}
We also have
\[
\norm{h}^2 = \sum_{j=1}^\infty \norm{h_j}^2 = \sum_{j=1}^\infty
\inp{He_j}{He_j} = \trace(H^*H) = \norm{H}_2^2.
\]
Hence, $\norm{h} = \norm{H}_2$ and $\norm{k} = \norm{K}_2$.

Given an operator $A\in B(\hs H)$ and a $\wk$-closed subspace $\cal S
\subseteq B(H)$, the distance from $A$ to $\cal S$ is given by
\begin{align*}
  d(A,\cal S) = \inf\{\norm{A+S}\,:\, S\in\cal S\} =
  \sup\{\abs{\trace(AT)}\,:\, T\in \cal S_\perp, \norm{T}_1 = 1\}
\end{align*}
where $\cal S_\perp$ denotes the preannihilator of $\cal S$. If we
write $T = HK^*$ where $H, K \in HS(\hs H)$ and $\norm{T}_1^{1/2} =
\norm{H}_2=\norm{K}_2$, $h_j = He_j$, and $k_j = Ke_j$ as before,
then $\norm{h}=\norm{k} = 1$. Since $T = HK^* \in
\cal S_\perp$ we have
\[
0 = \trace(SHK^*) = \sum_{j=1}^\infty \inp{Sh_j}{k_j} = \inp{(S\otimes
  I)h}{k},
\] 
for all $S\in \cal S$.  Hence, $k\perp (\cal S\otimes I)h$. Rewriting
the distance formula we get,
\begin{equation}\label{dist1}
  d(A,\cal S) = \sup\{\abs{\inp{(A\otimes I)h}{k}}\,:\,
  \norm{h}=\norm{k} = 1,k \perp (\cal S \otimes I)h\}.
\end{equation}
It will also prove useful to have such a formula when $\cal S \subseteq
B(\hs H_1,\hs H_2)$. Let $\hs H = \hs H_1\oplus \hs H_2$. We can
identify $B(\hs H_1,\hs H_2)$ with a subspace of $B(\hs H)$ in the usual
way, that is, $A(h_1\oplus h_2) = 0\oplus Ah_1$. Keeping the notation
from \eqref{dist1} we can write $h\in (\hs H_1\oplus \hs H_2)\otimes
\ell^2$ as $h = h_1\oplus h_2$, where $h_i\in \hs H_i\otimes \ell^2$
for $i=1,2$. We have,
\[
\inp{(A\otimes I)h}{k} = \inp{0\oplus (A\otimes I)h_1}{k_1\oplus k_2}
= \inp{(A\otimes I)h_1}{k_2}.
\] 
Since $k\perp (\cal S\otimes I)h$ we see that $k_2 \perp (\cal S\otimes I)
h_1$. Hence,
\[
d(A,\cal S) = \sup\{\abs{\inp{(A\otimes
    I)h_1}{k_2}}\,:\,\norm{h_1}=1,\norm{k_2}=1, k_2\perp (\cal S
\otimes I)h_1\}.
\]

These computations prove the following distance formula:

\begin{thm}
\label{tdist2}
  Let $\cal S$ be a \wk-closed subspace of $B(\hs H_1,\hs H_2)$ and
  let $A\in B(\hs H_1,\hs H_2)$. The distance of $A$ from $\cal S$ is
  given by
\begin{equation}\label{dist2}
  d(A,\cal S) = \sup\{\abs{\inp{(A\otimes
      I)h_1}{h_2}}\,:\,h_i\in \hs H_i\otimes\ell^2, \norm{h_i}=1, h_2\perp (\cal S
  \otimes I)h_1\}.
\end{equation}
\end{thm}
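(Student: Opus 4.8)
The plan is essentially to assemble the pieces already developed in the text into a clean statement. The main analytic input — the standard duality description of the distance from an operator to a weak$^*$-closed subspace — has already been invoked:
\begin{equation*}
d(A,\cal S) = \sup\{\abs{\trace(AT)}\,:\, T\in \cal S_\perp,\ \norm{T}_1 = 1\}.
\end{equation*}
I would begin by recalling why this holds: $\cal S$ is the preannihilator's annihilator inside $B(\hs H_1,\hs H_2) = \tc{(\hs H_1,\hs H_2)}^*$, so the quotient $B(\hs H_1,\hs H_2)/\cal S$ is isometrically the dual of $\cal S_\perp$, and the norm of the image of $A$ in that quotient is $d(A,\cal S)$, which by definition of the dual norm is the displayed supremum over the unit ball of $\cal S_\perp$. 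The supremum is attained on the unit sphere by scaling.

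Next I would pass from trace-class functionals to Hilbert-Schmidt data. First embed $\cal S \subseteq B(\hs H_1,\hs H_2)$ into $B(\hs H)$ with $\hs H = \hs H_1\oplus\hs H_2$ via $A(h_1\oplus h_2) = 0\oplus A h_1$, which is isometric and sends $\cal S$ to a weak$^*$-closed subspace. Given $T\in \cal S_\perp$ with $\norm{T}_1 = 1$, factor $T = HK^*$ with $H,K\in \hilbs H$ and $\norm{H}_2 = \norm{K}_2 = \norm{T}_1^{1/2} = 1$ (polar decomposition plus the square-root trick). Setting $h_j = He_j$, $k_j = Ke_j$ and $h = \oplus_j h_j$, $k = \oplus_j k_j \in \hs H\otimes\ell^2$, the computations already carried out in the text give $\trace(AT) = \inp{(A\otimes I)h}{k}$, $\norm{h} = \norm{H}_2 = 1$, $\norm{k} = \norm{K}_2 = 1$, and the orthogonality relation $k\perp(\cal S\otimes I)h$ from $T\in\cal S_\perp$. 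Decomposing $h = h_1\oplus h_2$, $k = k_1\oplus k_2$ according to $\hs H\otimes\ell^2 = (\hs H_1\otimes\ell^2)\oplus(\hs H_2\otimes\ell^2)$, the embedding forces $\inp{(A\otimes I)h}{k} = \inp{(A\otimes I)h_1}{k_2}$ and the orthogonality descends to $k_2\perp(\cal S\otimes I)h_1$. This shows $d(A,\cal S)$ is bounded above by the right-hand side of \eqref{dist2} (after noting $\norm{h_1}\le 1$, $\norm{k_2}\le 1$, which only helps the inequality in this direction once we check the reverse).

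For the reverse inequality, given any $h_1\in\hs H_1\otimes\ell^2$ and $h_2\in\hs H_2\otimes\ell^2$ with $\norm{h_1} = \norm{h_2} = 1$ and $h_2\perp(\cal S\otimes I)h_1$, I would run the construction backwards: writing $h_1 = \oplus_j a_j$ and $h_2 = \oplus_j b_j$, define Hilbert-Schmidt operators $H,K$ on $\hs H$ by $He_j = a_j\oplus 0$, $Ke_j = 0\oplus b_j$ (and zero on the complement of the chosen basis), so that $T := HK^*\in\tc{H}$ has $\norm{T}_1 \le \norm{H}_2\norm{K}_2 = 1$ and, reversing the trace computation, $\trace(AT) = \inp{(A\otimes I)h_1}{h_2}$ while the orthogonality hypothesis gives $\trace(ST) = \inp{(S\otimes I)h_1}{h_2} = 0$ for all $S\in\cal S$, i.e. $T\in\cal S_\perp$. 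Hence $\abs{\inp{(A\otimes I)h_1}{h_2}} \le d(A,\cal S)$, and taking the supremum yields equality in \eqref{dist2}. I do not anticipate a real obstacle here; the one point requiring a little care is that the restriction to vectors of norm exactly $1$ on both sides is legitimate — on the distance side because the dual norm is computed over the unit sphere, and on the supremum side because the quantity is $0$ whenever either vector vanishes, so extending to the unit ball does not change the supremum — and that the factorization $T = HK^*$ with matched Hilbert-Schmidt norms is available for every trace-class $T$, which is the standard consequence of the polar decomposition $T = U\abs{T}$ by taking $H = U\abs{T}^{1/2}$, $K = \abs{T}^{1/2}$.
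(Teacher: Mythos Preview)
Your proposal is correct and follows essentially the same route as the paper: the trace-class duality formula, the Hilbert--Schmidt factorization $T = HK^*$ to rewrite $\trace(AT)$ as $\inp{(A\otimes I)h}{k}$, and the embedding $B(\hs H_1,\hs H_2)\hookrightarrow B(\hs H_1\oplus\hs H_2)$ to reduce to the square case. If anything, you are slightly more careful than the paper in writing out the reverse inequality (reconstructing a trace-class $T$ from a given pair $h_1,h_2$) and in justifying the passage between $\norm{h_i}\le 1$ and $\norm{h_i}=1$; the paper leaves both of these implicit.
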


\section{A Tangential interpolation theorem for subalgebras of $H^\infty$}
\label{maintangent}
\subsection{T\"oplitz corona problem}
We had made the claim at the end of Section~\ref{tangent} that the
multiplier algebra was \wk-closed and that point evaluations are
\wk-continuous. We now prove that claim.
\begin{lemma}\label{weakclosed}
  Let $K$ be a kernel on a set $X$ such that $K(x,x)\not= 0$ for all
  $x\in X$. Then the algebra $\mult(\hs H)$ is \wk-closed when viewed as a
  subalgebra of $B(\hs H(K))$. In addition, the evaluation map on $\mult(K)$
  given by $f\mapsto f(x)$ is \wk-continous.
\end{lemma}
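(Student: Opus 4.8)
The plan is to give an explicit description of $\mult(K)$, regarded as a subalgebra of $\bh{H}$ with $\hs H=\hs H(K)$, in terms of the kernel functions $k_x$, and then to read off both conclusions from the observation that this description is assembled out of \wk-continuous conditions. The standing hypothesis $K(x,x)=\norm{k_x}^2\neq 0$ enters only to guarantee that $k_x\neq 0$ for every $x$.

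First I would prove the set equality
\[
\mult(K)=\{\, T\in \bh{H}\,:\, T^*k_x\in\bbC k_x\text{ for every }x\in X\,\}.
\]
The inclusion $\subseteq$ is the $\hs H_1=\hs H_2=\hs H$ instance of the identity $M_f^*k_{2,x}=\cl{f(x)}k_{1,x}$ recorded above, which here reads $M_f^*k_x=\cl{f(x)}k_x$. For $\supseteq$, suppose $T\in\bh{H}$ satisfies $T^*k_x\in\bbC k_x$ for all $x$; since $k_x\neq 0$ there is a unique scalar $\mu_x$ with $T^*k_x=\mu_x k_x$, and I set $f(x):=\cl{\mu_x}$. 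Then for every $h\in\hs H$ and every $x\in X$ one has $(Th)(x)=\inp{Th}{k_x}=\inp{h}{T^*k_x}=\cl{\mu_x}\inp{h}{k_x}=f(x)h(x)$, so $Th=fh$ as functions; since $Th\in\hs H$, this says $f$ is a multiplier and $M_f=T$. The substantive content of the lemma is essentially contained in this characterization.

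Next I would deduce \wk-closedness. Fix $x\in X$. The condition $T^*k_x\in\bbC k_x$ holds if and only if $T^*k_x$ is orthogonal to every $\eta$ with $\eta\perp k_x$, that is, $\inp{T\eta}{k_x}=\cl{\inp{T^*k_x}{\eta}}=0$ for all such $\eta$. Writing $\eta\otimes k_x$ for the rank-one (hence trace class) operator $v\mapsto\inp{v}{k_x}\eta$, one has $\inp{T\eta}{k_x}=\trace(T(\eta\otimes k_x))$, so each of these conditions defines the kernel of a \wk-continuous linear functional on $\bh{H}$, i.e.\ a \wk-closed subspace. Intersecting over all $\eta\perp k_x$ and over all $x\in X$ exhibits $\mult(K)$ as an intersection of \wk-closed subspaces of $\bh{H}$, hence \wk-closed. (One could instead run this through the Krein--Smulian theorem, but the intersection argument avoids it.) For \wk-continuity of evaluation, from $M_f^*k_x=\cl{f(x)}k_x$ I would extract $\inp{M_f k_x}{k_x}=f(x)K(x,x)$, so that $f(x)=\inp{M_f k_x}{k_x}/K(x,x)=\trace(M_f(k_x\otimes k_x))/K(x,x)$; since $k_x\otimes k_x$ is trace class and $K(x,x)\neq 0$, $f\mapsto f(x)$ is a nonzero scalar multiple of the restriction to $\mult(K)$ of a \wk-continuous functional on $\bh{H}$, hence \wk-continuous.

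I do not expect a genuine obstacle here: once the eigenvector characterization of $\mult(K)$ is in hand, the rest is a short duality computation of the kind already carried out in Section~\ref{distance}. The one point to keep an eye on is the hypothesis $K(x,x)\neq 0$: it is what makes $k_x$ nonzero, so that the eigenvalue $\mu_x$ (and hence the function $f$) is well defined and unique in the first step, and it is what one divides by in the continuity step.
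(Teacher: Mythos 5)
Your proof is correct and takes essentially the same route as the paper's: both arguments rest on the eigenvector identity $M_f^*k_x=\cl{f(x)}k_x$ together with the converse observation that $T^*k_x\in\bbC k_x$ for every $x$ (with $k_x\neq 0$) forces $T=M_f$, and both obtain the continuity of evaluation from $f(x)=\inp{M_fk_x}{k_x}/K(x,x)$. The only difference is packaging: the paper takes a \wk-convergent net $M_{f_t}\to T$ and extracts $f(x)=\lim_t f_t(x)$ directly, while you exhibit $\mult(K)$ as an intersection of kernels of \wk-continuous functionals — an equivalent, slightly more static formulation.
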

\begin{proof}
  Let $M_{f_t}$ be a net that converges to an operator $T \in B(\hs
  H)$ in the \wk-topology. We have, $\inp{M_{f_t}h}{k}\to \inp{Th}{k}$
  for any pair of vectors $h,k\in \hs H$. Let $k = k_x$ and $h =
  k_y$. We have,
  \[\inp{Tk_y}{k_x} = \lim_t \inp{M_{f_t}k_y}{k_x} = \lim_t
  \inp{k_y}{M_{f_t}^*k_x} = \lim_t f_t(x)\inp{k_y}{k_x}. \] If we
  choose $x = y$, and use the fact that $K(x,x)\not = 0$, then $\lim_t f_t(x) =
  \frac{\inp{Tk_x}{k_x}}{K(x,x)}$. Hence, $\lim_t f_t(x)$ exists. Let
  us denote the limit by $f(x)$. We have $\inp{h}{T^*k_x} =
  f(x)\inp{h}{k_x}$ for all $h\in \hs H$ and $x\in X$, and so $T^*k_x
  = \cl{f(x)}k_x$. It follows that $f$ is a multiplier of $\hs H$ and
  that $T = M_f$.

  The above argument also shows that if $M_{f_t}\to M_f$ in the
  \wk-topology, then $f_t(x) \to f(x)$.
\end{proof}
We now show how the hypothesis of the T\"oplitz corona problem leads
to a tangential interpolation problem.  

Suppose that $F\in \mult(\hs H\otimes \hs C, \hs H)$ and that
$M_FM_F^\ast \geq \delta^2I$. We have, $M_F^*k_x = k_x\otimes F(x)^*$,
where we have identified $B(\hs{C},\bbC)$ with $\hs C$.  Hence,
\begin{align*}
  \inp{M_F^*k_y}{M_F^*k_x} & =  \inp{k_y\otimes F(y)^*}{k_x\otimes F(x)^*}\\
  & = \inp{F(y)^*}{F(x)^*}K(x,y)
\end{align*}  
Let $Y \subseteq X$ and let $\hs K_Y$ be the span of
$\{k_x\,:\, x\in Y\}$.  Since the space $\hs H$ is the closed linear
span of the set of kernel functions $k_x$ for $x\in X$, the operator
$M_FM_F^*-\delta^2I$ is positive if and only if $(M_FM_F^\ast -
\delta^2I)|_{\hs K_Y}$ is positive for all finite sets $Y\subseteq
X$. The latter condition is equivalent to the positivity of the matrix
\begin{equation}\label{matpos2}
  [(\inp{F(y)^*}{F(x)^*}-\delta^2)K(x,y)]_{x,y\in Y}
\end{equation}
for all finite sets $Y\subseteq X$.
\begin{prop}\label{tangtop}
  Let $\hs H$ be an RKHS on $X$ and let $\{K_\lambda :\lambda\in
  \Lambda\}$ be a set of kernels with the tangential interpolation
  property for $\mult(\hs H)$. Let $\hs H_\lambda = \hs H(K_\lambda)$,
  let $F\in \mult(\hs H \otimes \hs C, \hs H)$, and let
  $M_{F,\lambda}$ denote the operator of multiplication by $F$ between
  the spaces $\hs H_\lambda \otimes \hs C$ and $\hs
  H_\lambda$. Suppose that for each
  $\lambda\in\Lambda$, we have $M_{F,\lambda}M_{F,\lambda}^*\geq
  \delta^2I_\lambda$. Then there exists a multiplier $G\in \mult(\hs
  H,\hs H\otimes\hs C)$ such that $\norm{G}\leq \delta^{-1}$ and $FG =
  1$.
\end{prop}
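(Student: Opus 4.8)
The plan is to reduce Proposition~\ref{tangtop} to the tangential interpolation property of the family $\{K_\lambda\}$ by producing, for the given row multiplier $F$, a column interpolation datum whose solvability is exactly the assertion $FG=1$ with $\norm{G}\leq\delta^{-1}$. First I would observe that the hypothesis $M_{F,\lambda}M_{F,\lambda}^*\geq\delta^2 I_\lambda$ is, by the computation leading to \eqref{matpos2} applied to each $\hs H_\lambda$ in place of $\hs H$, equivalent to the positivity of
\[
[(\inp{F(y)^*}{F(x)^*}-\delta^2)K_\lambda(x,y)]_{x,y\in Y}\geq 0
\]
for every finite $Y\subseteq X$ and every $\lambda\in\Lambda$. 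The idea is to interpret this, for a fixed finite set $Y=\{x_1,\ldots,x_n\}$, as the matrix positivity condition \eqref{matpos} for a column tangential interpolation problem with $\alpha=\delta^{-1}$. Indeed, setting $v_j := F(x_j)^* \in \hs C$ and $w_j := 1$ (so $\cl{w_j}=1$), the matrix $[(\alpha^2\inp{v_j}{v_i}-w_i\cl{w_j})K_\lambda(x_i,x_j)]$ becomes $[(\delta^{-2}\inp{F(x_j)^*}{F(x_i)^*}-1)K_\lambda(x_i,x_j)]$, which is $\delta^{-2}$ times the positive matrix above. Hence the hypothesis gives $Q_\lambda\geq 0$ for all $\lambda$, and the tangential interpolation property yields, for this finite $Y$, a multiplier $G_Y\in\mult(\hs H,\hs H\otimes\hs C)$ with $\norm{G_Y}\leq\delta^{-1}$ satisfying $G_Y(x_j)^*v_j=\cl{w_j}=1$, i.e. $G_Y(x_j)^*F(x_j)^*=1$, equivalently $F(x_j)G_Y(x_j)=1$ for $x_j\in Y$; that is, $(FG_Y)(x_j)=1$ on $Y$.

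The next step is to pass from these finite approximate solutions to a single $G$ with $FG\equiv 1$ on all of $X$. Here I would use a weak$^\ast$-compactness argument: the ball of radius $\delta^{-1}$ in $\mult(\hs H,\hs H\otimes\hs C)=C(\mult(\hs H))$ is weak$^\ast$-compact (it is the unit ball of the dual of a separable predual, scaled), so the net $(G_Y)$, indexed by finite subsets $Y$ directed by inclusion, has a weak$^\ast$-convergent subnet with limit $G$, and $\norm{G}\leq\delta^{-1}$. By Lemma~\ref{weakclosed}, point evaluation $f\mapsto f(x)$ on $\mult(\hs H)$ is weak$^\ast$-continuous; this passes to the column/entrywise setting, so $G_Y(x)\to G(x)$ for each fixed $x$. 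Since eventually $x\in Y$, we get $(FG)(x)=F(x)G(x)=\lim (FG_Y)(x)=1$ for every $x\in X$, where I should note that $F(x)$ is a fixed bounded functional on $\hs C$ so it is continuous against the convergence $G_Y(x)\to G(x)$. Finally $FG=1$ as multipliers (they agree at every point of $X$, and a multiplier is determined by its pointwise values), completing the proof.

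I expect the main obstacle to be the soft-analysis passage from the finite data sets to the global solution — specifically, making precise that weak$^\ast$-convergence of $G_Y$ in $C(\mult(\hs H))$ implies pointwise convergence $G_Y(x)\to G(x)$ in $\hs C$, and that composing with the fixed row operator $F$ is compatible with this. The subtlety is that $F(x)\in B(\hs C,\bbC)$ pairs an entire $\ell^2$-vector against $G_Y(x)$, so one needs the convergence $G_Y(x)\to G(x)$ to be at least weak in $\hs C$ (which is what weak$^\ast$-continuity of the evaluation delivers, coordinatewise, combined with the uniform norm bound). Everything else — the translation of the operator inequality into matrix positivity, and the identification of the interpolation datum $v_j=F(x_j)^*$, $w_j=1$ — is bookkeeping already set up in the earlier sections.
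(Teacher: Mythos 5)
Your proposal is correct and follows essentially the same route as the paper: translate the operator inequality on each $\hs H_\lambda$ into the matrix positivity condition \eqref{matpos} with data $v_j=F(x_j)^*$, invoke the tangential interpolation property on each finite set, and pass to a weak$^\ast$-limit of the resulting net using Lemma~\ref{weakclosed}. The only (immaterial) difference is normalization — the paper takes $w_j=\delta$, $\alpha=1$ and rescales the contractive limit by $\delta^{-1}$ at the end, whereas you take $w_j=1$, $\alpha=\delta^{-1}$ directly.
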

\begin{proof}
  Since $M_{F,\lambda}M_{F,\lambda}^* \geq \delta^2 I_\lambda$, given
  a finite set $Y=\finset{x}$, \eqref{matpos2} shows that the matrix
  $[(\inp{F(x_j)^*}{F(x_i)^*}-\delta^2)K_\lambda(x_i,x_j)]\geq 0$ for
  all $\lambda\in \Lambda$. This matrix is of the form $Q_\lambda$ in
  \eqref{matpos} for the case where the vectors are $v_j = F(x_j)^*$,
  and the scalars $w_j = \delta$ for $j=1,\dots,n$. Since, $K_\lambda$
  has the tangential interpolation property, there is a contractive
  multiplier $G_Y\in \mult(\hs H,\hs H\otimes \hs C)$ such that
  $\delta = G_Y(x_j)^*F(x_j)^* = (F(x_j)G_Y(x_j))^*$ for
  $j=1,\ldots,n$. Since $\delta>0$ we get, $F(x_j)G_Y(x_j) = \delta$
  for $j=1,\ldots,n$. 

  The net $G_Y$ is contained in the unit ball of the space $\mult(\hs
  H,\hs H \otimes \hs C)$, which is $\wk$-compact subset of $B(\hs H,
  \hs H \otimes \hs C)$. If $\cal F$ denotes the collection of all
  finite subsets of $X$, then then there is a subnet $\{F_t\}\subseteq
  \cal F$ such that of $\{G_{F_t}\}$ converges in the $\wk$-topology
  to a contractive multiplier $G$. Since point evaluations are
  $\wk$-continuous on $\mult(\hs H, \hs H\otimes \hs C)$ we see, for a
  fixed $x\in X$, that $F(x)G(x) = \lim_{F_t} F(x)G_{F_t}(x) =
  \delta$. Therefore $FG = \delta$. It follows that $F(\delta^{-1}G) =
  1$ and $\norm{\delta^{-1}G}_{\mult(\hs H,\hs H\otimes \hs C)} \leq
  \delta^{-1}$.
\end{proof}

We have established that if $\{K_\lambda\,:\, \lambda\in \Lambda\}$ is
a set of kernels that have the tangential interpolation property, then
the condition $M_{F,\lambda}M_{F,\lambda}^*\geq \delta^2I_\lambda$
implies the existence of a multiplier $G\in \mult(\hs H,\hs H\otimes
\hs C)$ such that $\norm{G}\leq \delta^{-1}$ and $FG = 1$.

Our strategy for the proof of Theorem~\ref{interp} is to exploit the
distance formula and the existence of at least \textit{one} solution
to the tangential interpolation problem.

\subsection{The existence of solutions} 
Now let $\alg A$ be a unital $\wk$-closed subalgebra of the multiplier
algebra of $\hs H(K)$. Let $g\in \hs H$ be a nonvanishing function. In
particular, recall that an outer function $g$ does not vanish at any
point in the disk. We view $\mult(\hs H)$ as a subalgebra of $B(\hs
H)$. Let $\hs H_g$ be the closure of $\alg Ag$ in $\hs H$, let $K_g$
be the kernel of $\hs H_g$, let $k_x^g$ be the kernel function at the
point $x$, and let $Q_g =
[(\inp{v_j}{v_i}-w_i\cl{w_j})K_g(x_i,x_j)]$. We have assumed that
$\alpha = 1$. However, since this amounts to a rescaling, there is no
loss of generality in doing so. Since $g$ does not vanish at any point
$x\in X$, the kernel $K_g(x,x)\not =0$ for any $x$. Therefore, the
results of the previous section do apply.

We will establish the fact the positivity of the matrix $Q_g$ implies
the existence of a multiplier $F\in C(\alg A)$ such that $F(x_j)^*v_j
= \cl{w_j}$. We will then establish the fact that the closure of $\cal
J g$ in $\hs H_g\otimes \ell^2$ is the set of functions $f\in \hs
H_g\otimes \ell^2$ such that $\inp{f(x_j)}{v_j} = 0$ for
$j=1,\ldots,n$.

We say that the algebra $\alg A$ separates $x$ and $y$ if and only if
there exists a function $f\in \alg A$ such that $f(x)\not = f(y)$. We
say that the algebra $\alg A$ separates a set of points $Y$ if and
only if $\alg A$ separates $x$ and $y$ for all $x,y\in Y$.

\begin{lemma}
Every element of $\cal A$ is a multiplier of $\hs H_g$.
\end{lemma}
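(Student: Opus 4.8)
The plan is to show that for each $f \in \calA$, multiplication by $f$ maps $\hs H_g = [\calA g]$ into itself. First I would observe that since $\calA$ is an algebra, $f(\calA g) \subseteq \calA g$: indeed if $a \in \calA$ then $f \cdot (ag) = (fa)g$ and $fa \in \calA$ since $\calA$ is closed under multiplication. So on the dense subset $\calA g$ of $\hs H_g$, the map $M_f$ (multiplication by $f$, viewed as an operator on $\hs H$) sends the subset into $\calA g \subseteq \hs H_g$.

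Next I would invoke boundedness: $f$ is a multiplier of $\hs H$ (since $\calA \subseteq \mult(\hs H)$), so $M_f \in B(\hs H)$ is a bounded operator on the ambient space $\hs H$. A bounded operator that maps a dense subset of a closed subspace $\hs H_g$ into $\hs H_g$ must map all of $\hs H_g$ into $\hs H_g$: for $v \in \hs H_g$ take $a_k g \to v$ with $a_k \in \calA$, then $M_f(a_k g) = (fa_k)g \in \hs H_g$ and $M_f(a_k g) \to M_f v$ by continuity of $M_f$, so $M_f v \in \hs H_g$ since $\hs H_g$ is closed. Hence $M_f|_{\hs H_g}$ is a well-defined bounded operator on $\hs H_g$.

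Finally I would argue that this restricted operator is exactly multiplication by $f$ in the RKHS sense on $\hs H_g$, i.e.\ $(M_f v)(x) = f(x) v(x)$ for all $x \in X$ and $v \in \hs H_g$. This is immediate because $\hs H_g \subseteq \hs H$ as sets of functions and the action of $M_f$ on $\hs H$ is pointwise multiplication by $f$; restricting to elements of $\hs H_g$ changes nothing. Since pointwise multiplication by $f$ carries $\hs H_g$ into $\hs H_g$ boundedly, $f$ is by definition a multiplier of $\hs H_g$, which is what we wanted.

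I do not anticipate a serious obstacle here: the only point requiring a little care is the passage from the dense subset $\calA g$ to all of $\hs H_g$, which uses nothing beyond boundedness of $M_f$ on $\hs H$ and closedness of $\hs H_g$. The statement is essentially a formal consequence of $\calA$ being an algebra together with $\hs H_g$ being defined as the closure of $\calA g$.
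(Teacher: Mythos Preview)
Your proof is correct and follows essentially the same approach as the paper: both argue that $M_f$ maps the dense set $\calA g$ into itself (using that $\calA$ is an algebra), then use boundedness of $M_f$ on $\hs H$ and closedness of $\hs H_g$ to pass to the closure. The paper's version is simply a terser rendition of the same continuity argument.
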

\begin{proof}
  Let $h\in \hs H_g$ and let $f_n$ be a sequence in $\calA$ such that
  $\norm{f_ng-h}_{\hs H} \to 0$. Let $f\in \calA$. We have
  $\norm{(ff_n)g - fh}_{\hs H} \leq \norm{M_f}\norm{f_ng-h}_{\hs H}
  \to 0$. Hence, $fh \in \hs H_g$.
\end{proof}

\begin{lemma}\label{linindep}
  The algebra $\alg A$ separates $x$ and $y$ if and only if $k_x^g$ and
  $k_y^g$ are linearly independent.
\end{lemma}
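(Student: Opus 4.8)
The plan is to exploit the reproducing property together with the fact, just established in the previous lemma, that every $f\in\calA$ is a multiplier of $\hs H_g$, so that $M_f^\ast k_x^g=\cl{f(x)}\,k_x^g$ for every $x\in X$. Observe first that $\norm{k_x^g}^2=K_g(x,x)\neq 0$ for all $x$ (since $g$ vanishes nowhere, as noted above), so every $k_x^g$ is a nonzero vector; consequently $k_x^g$ and $k_y^g$ are linearly dependent exactly when one is a nonzero scalar multiple of the other. This reduces both implications to tracking a single scalar.

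For the forward direction, suppose $\calA$ separates $x$ and $y$ but, toward a contradiction, $k_x^g=c\,k_y^g$ for some scalar $c\neq 0$. Choose $f\in\calA$ with $f(x)\neq f(y)$. Applying $M_f^\ast$ to the identity $k_x^g=c\,k_y^g$ yields $\cl{f(x)}\,k_x^g=M_f^\ast k_x^g=c\,M_f^\ast k_y^g=c\,\cl{f(y)}\,k_y^g=\cl{f(y)}\,k_x^g$, and since $k_x^g\neq 0$ this forces $f(x)=f(y)$, a contradiction. Hence $k_x^g$ and $k_y^g$ are linearly independent.

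For the converse, assume $\calA$ does not separate $x$ and $y$, i.e. $f(x)=f(y)$ for all $f\in\calA$. Because $\calA$ is a linear subspace, so is $\calA g\subseteq\hs H$, and its closure is $\hs H_g$; thus any $h\in\hs H_g$ is a limit $h=\lim_n f_n g$ with $f_n\in\calA$. Since point evaluation is bounded on $\hs H$, and hence on $\hs H_g$, we get $f_n(x)g(x)=(f_ng)(x)\to h(x)$ and likewise $f_n(y)g(y)\to h(y)$; as $g(x),g(y)\neq 0$ this gives $f_n(x)\to h(x)/g(x)$ and $f_n(y)\to h(y)/g(y)$, and since $f_n(x)=f_n(y)$ the two limits coincide, so $g(y)h(x)=g(x)h(y)$ for every $h\in\hs H_g$. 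Rewriting via the reproducing property, $\inp{h}{\cl{g(y)}\,k_x^g-\cl{g(x)}\,k_y^g}=0$ for all $h\in\hs H_g$, whence $\cl{g(y)}\,k_x^g=\cl{g(x)}\,k_y^g$; as $g(x),g(y)\neq 0$, $k_x^g$ and $k_y^g$ are nonzero scalar multiples of each other, hence linearly dependent. The only delicate point is this last step: one must transfer the relation $f_n(x)=f_n(y)$ from the dense subspace $\calA g$ to all of $\hs H_g$, which is precisely where the boundedness of the evaluation functionals (inherited from the ambient RKHS $\hs H$) is needed; everything else is a direct application of the multiplier identity for $M_f^\ast$ and the reproducing property.
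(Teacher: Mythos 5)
Your proposal is correct and follows essentially the same route as the paper: the forward direction rests on the identity $M_f^\ast k_x^g=\cl{f(x)}\,k_x^g$ (the paper normalizes to $f(x)=1$, $f(y)=0$ and shows the coefficients of a vanishing linear combination are zero, while you argue by contradiction, but the substance is identical), and the converse is the same limiting argument with $f_ng\to h$ and $f_n(x)=f_n(y)$ yielding $\cl{g(y)}k_x^g=\cl{g(x)}k_y^g$ (the paper takes $h=k_z^g$; you take arbitrary $h$, which gives the same conclusion).
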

\begin{proof}
Suppose that $\alg A$ does separate $x,y$
and that $f\in \alg A$ with $f(x)=1$ and $f(y)=0$. Note that $f$ is a
multiplier of $\hs H_g$. Assume that $\alpha k_x^g + \beta k_y^g = 0$. We have
\[ 0 = M_f^\ast (\alpha k_x^g +\beta k_y^g) = \alpha \cl{f(x)}k_x^g + \beta
\cl{f(y)}k_y^g = \alpha k_x^g.\] Since $g$ is a nonvanishing function in
$\hs H_g$ we know that $k_x^g\not =0$ and so $\alpha = 0$. A similar
argument shows that $\beta = 0$.

Conversely, suppose that $\alg A$ does not separate $x$ and $y$. Let
$z\in X$ and let $f_n$ be a sequence in $\alg A$ such that $f_ng\to
k_z^g$. We have $k_z^g(x) = \lim_{n\to\infty} f_n(x)g(x)$. On the
other hand, $f_n(x)=f_n(y)$ and so 
\begin{align*}
k_z^g(y) &= \lim_{n\to\infty}f_n(y)g(y) = \lim_{n\to\infty} f_n(x)g(y) \\
& = \lim_{n\to\infty} f_n(x)g(x)\frac{g(y)}{g(x)} = k_z^g(x)\frac{g(y)}{g(x)}.
\end{align*}  
Hence, $g(x)K_g(y,z) -g(y)K_g(x,z) = 0$ for all $z\in X$ and so
$\cl{g(x)}k_y^g - \cl{g(y)}k_x^g = 0$ with $g(x),g(y)\not = 0$.
\end{proof}
The relation $x\sim y$ if and only if $f(x)=f(y)$ for all
$f\in \alg A$ is an equivalence relation on $X$. Let
$\{x_1,\ldots,x_n\}$ be given. Let us reorder the points
$\{x_1,\ldots,x_n\}$ in such a way that there is a sequence
$n_0=0<n_1<\cdots<n_p=n$ such that the sets $X_k = \{x_i\,:\, n_{k-1}<
i \leq n_k\}$ are the equivalence classes for the above equivalence
relation.

\begin{lemma}\label{pointsep}
  If $Q_g\geq 0$, then there exists a multiplier $F\in C(\cal A)$
  such that $F(x_j)^*v_j = \cl{w_j}$ for $j=1,\ldots,n$. In addition, the
  subspace $[\cal Jg]$ is precisely the set of functions in $\cal
  H_g\otimes \ell^2$ such that $\inp{f(x_j)}{v_j} = 0$ for
  $j=1,\ldots,n$
\end{lemma}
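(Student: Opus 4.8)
The plan is to reduce the tangential interpolation problem, via the separation structure on the points, to an ordinary (non-tangential) Nevanlinna--Pick problem on the quotient set $X/\!\sim$, where Lemma~\ref{linindep} guarantees that the kernel matrix $[K_g(x_i,x_j)]$ has a clean block structure. Concretely, because $x\sim y$ forces $k_x^g$ and $k_y^g$ to be parallel (with proportionality constant $g(y)/g(x)$, again by Lemma~\ref{linindep}), the span $\hs K = \linspan\{k_{x_1}^g\otimes v_1,\ldots,k_{x_n}^g\otimes v_n\}$ is really only ``$p$-dimensional worth of points.'' So the first step is to pass to one representative $x_{i(k)}$ from each equivalence class $X_k$, and to collapse the data $\{(x_i,v_i,w_i): n_{k-1}<i\le n_k\}$ within a class into a single vector/scalar pair: within $X_k$ the kernels $k_{x_i}^g$ are all scalar multiples of a fixed one, so the $n_k-n_{k-1}$ tangential conditions $\inp{v_i}{F(x_i)} = \cl{w_i}$ become, after rescaling by the (nonzero) values $g(x_i)$, several conditions on the single value $F(x_{i(k)})\in\ell^2$, namely $\inp{\tilde v_i}{F(x_{i(k)})} = \cl{\tilde w_i}$ for a family of vectors $\tilde v_i$. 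These are consistent exactly when the corresponding diagonal block of $Q_g$ is positive semidefinite, which is part of the hypothesis $Q_g\ge 0$.

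Second, once the data is consolidated so that the points $x_{i(1)},\ldots,x_{i(p)}$ are genuinely separated by $\alg A$ and hence $k_{x_{i(1)}}^g,\ldots,k_{x_{i(p)}}^g$ are linearly independent in $\hs H_g$, I would invoke the distance-formula machinery: by Theorem~\ref{tdist2} applied to $\cal S = \cal J$ (or rather its compression to $\hs H_g$) together with the positivity of $Q_g$, one obtains an operator on $\hs K$ of norm $\le\alpha=1$ implementing the adjoint action $M_F^*$, and then one extends it off $\hs K$ to a genuine multiplier. This is the standard Parrott/commutant-lifting style step, and it is exactly here that the hypotheses ``$\alg A$ is unital and $\wk$-closed'' and ``$g$ nonvanishing'' are used — the $\wk$-closedness to guarantee the weak-$*$ limit/extension stays in $C(\alg A)$, and nonvanishing of $g$ to ensure $k_x^g\neq 0$ so that $Q_g\ge 0$ is not only necessary but, restricted to the linearly-independent kernels, sufficient to build $M_F^*|_{\hs K}$.

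For the second assertion — that $[\cal J g] = \{f\in \hs H_g\otimes\ell^2 : \inp{f(x_j)}{v_j}=0,\ j=1,\ldots,n\}$ — the inclusion $\subseteq$ is immediate from $\wk$-continuity of point evaluations (Lemma~\ref{weakclosed}), since $G(x_j)^*v_j=0$ for $G\in\cal J$ and this passes to limits. For $\supseteq$, let $f$ lie in the right-hand side; the orthogonal complement of $[\cal J g]$ in $\hs H_g\otimes\ell^2$ is spanned by vectors of the form $k_{x_j}^g\otimes v_j$ (this is dual to the first part: $\cal J g$ is cut out precisely by the finitely many linear functionals $h\mapsto\inp{h(x_j)}{v_j}=\inp{h}{k_{x_j}^g\otimes v_j}$, and there are no others because $\alg A g$ is dense and point evaluations at the separated representatives are the only obstructions), so $f$ being annihilated by all of them means $f\perp([\cal J g])^\perp$, i.e. $f\in[\cal J g]$. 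The slightly delicate point is checking that $([\cal J g])^\perp$ is \emph{exactly} $\linspan\{k_{x_j}^g\otimes v_j\}$ and nothing larger; this is where one again uses that $\alg A$ separates the reduced point set and that $g$ is outer/nonvanishing, via Lemma~\ref{linindep}.

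The main obstacle I expect is the extension step in the second paragraph: building a full multiplier $F\in C(\alg A)$ (not merely a contraction on the finite-dimensional piece $\hs K$) out of the positivity of $Q_g$, for a general $\wk$-closed subalgebra $\alg A$ that need not be all of $H^\infty$ and need not have the complete Nevanlinna--Pick property. This is presumably where the bulk of the argument lives and where the distance formula of Section~\ref{distance}, rather than a direct lifting theorem, does the real work — one writes the distance from a chosen solution to $\cal J$ using \eqref{dist2}, and shows the supremum there is $\le\alpha$ precisely when $Q_g\ge 0$, the chosen initial solution having been produced by a soft (finite-dimensional linear algebra plus $\wk$-compactness) argument on the consolidated separated data.
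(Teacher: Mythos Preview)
You have misidentified the role of this lemma in the paper's architecture, and this leads to a circular proposal. The lemma does \emph{not} produce a solution of norm at most $\alpha$; it only asserts that \emph{some} solution $F\in C(\calA)$ exists, with no control on $\norm{F}$ whatsoever. The norm bound is the content of Theorem~\ref{interp}, whose proof takes this lemma as input to the distance formula~\eqref{dist2}: one needs an initial solution $F_0$ in hand before one can even speak of $d(F_0,\cal J)$. So invoking Theorem~\ref{tdist2} or a Parrott/commutant-lifting extension here is backwards---the distance-formula argument requires the lemma, not the other way around---and in any event no commutant lifting theorem is available for a general \wk-closed subalgebra $\calA\subseteq H^\infty$.

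The paper's proof of the first assertion is entirely elementary. By Lemma~\ref{linindep} the algebra separates the equivalence classes $X_1,\ldots,X_p$, so there exist $e_1,\ldots,e_p\in\calA$ with $e_k|_{X_l}\equiv\delta_{k,l}$. Within a single class $X_k$ the kernels $k_{x_i}^g$ are all scalar multiples of one another, so positivity of the corresponding diagonal block of $Q_g$ reduces (after cancelling the nonzero factors $\lambda_i\cl{\lambda_j}K_g(y_1,y_1)$) to $[\inp{v_j}{v_i}]\ge[w_i\cl{w_j}]$; this range inclusion yields a single vector $\xi^{(k)}\in\ell^2$ with $\inp{\xi^{(k)}}{v_i}=w_i$ for all $i$ in that class. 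Then $F=\sum_{k=1}^p e_k\otimes\xi^{(k)}$ is an explicit element of $C(\calA)$ solving the interpolation. No lifting, no \wk-compactness, no distance formula---just the separating idempotents and a Gram-matrix argument. Your first paragraph correctly isolates the consistency condition within a class, but your second paragraph then reaches for machinery that is neither needed nor available here.

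For the second assertion your duality sketch is circular at exactly the point that matters. The inclusion $[\cal Jg]\subseteq\{f:\inp{f(x_j)}{v_j}=0\}$ is indeed immediate. For the reverse inclusion you assert that $([\cal Jg])^\perp=\linspan\{k_{x_j}^g\otimes v_j\}$ and then conclude; but that equality \emph{is} the reverse inclusion restated, so nothing has been shown. The paper proves the reverse inclusion directly: given $f$ in $\hs H_g\otimes\ell^2$ satisfying the vanishing conditions, approximate it by $F_m g$ with $F_m\in C(\calA)$, and then correct each $F_m$ to lie in $\cal J$ by subtracting $G_m=\sum_{k} e_k\otimes\eta_{k,m}$, where $\eta_{k,m}$ is the orthogonal projection of $F_m(x_{n_k})$ onto $\linspan\{v_i:n_{k-1}<i\le n_k\}$. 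One checks that $F_m-G_m\in\cal J$ and that $\norm{G_m g}\to 0$ because $\eta_{k,m}\to 0$ (this last uses the vanishing hypothesis on $f$). This explicit correction, again built from the separating idempotents $e_k$, is the actual content of the hard direction; your proposal does not supply it.
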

\begin{proof}
  By Lemma~\ref{linindep} there exist functions $e_1,\ldots,e_p$ such
  that $e_k|_{X_l}(x) = \delta_{k,l}$ for $k,l=1,\ldots,p$.

  To simplify notation let $K = K_g$, let $Q =
  [(\inp{v_j}{v_i}-w_i\cl{w_j})K(x_i,x_j)]_{i,j=1}^n = [q_{i,j}]$ and
  let $Q_k = [q_{i,j}]_{n_{k-1}<i,j\leq n_k}$.  Let $k$ be given and
  let $t = n_k-n_{k-1}$. We temporarily set $Y = X_k$ and relabel the
  points of the set $X_k$ as $y_1,\ldots,y_t$. We also relabel the
  corresponding vectors as $v_1,\ldots,v_t$ and the scalars
  $w_1,\ldots,w_t$. Let $e = e_k$. 

  Since the algebra $\alg A$ fails to separate any two points of $Y$
  we see that there exists a sequence of nonzero scalars
  $\lambda_1,\ldots,\lambda_{t}$ such that $k_{y_i} = \lambda_i
  k_{y_1}$. The matrix $Q_k$ is given by
  \[Q_k
  =[(\inp{v_j}{v_i}-w_i\cl{w_j})K(y_1,y_1)\lambda_i\cl{\lambda_j}].\]
  Since this is a square submatrix of $Q$ we know that $Q_k\geq
  0$. Since the $\lambda_i$ are nonzero and $K(y_1,y_1)$ is nonzero we
  see that $[\inp{v_j}{v_i}]\geq [w_i\cl{w_j}]$. Hence, the vector
  $(w_1,\ldots,w_t)^t$ is in the range of the matrix $P =
  [\inp{v_j}{v_i}]$. Therefore, there are scalars
  $\alpha_1,\ldots,\alpha_t$ such that $\inp{\sum_{j=1}^t \alpha_j
    v_j}{v_i} = w_i$ for $i=1,\ldots,t$. Let $\xi = \sum_{j=1}^t
  \alpha_j v_j \in \ell^2$. We let $\xi_i$ denote the $i$th component
  of $\xi$. Consider the function $F = (\xi_1e,\xi_2e,\ldots)^t$,
  which belongs to $C(\alg A)$, because $\xi\in \ell^2$. We have that
  $F(x) = \xi$ if $x\in Y$ and is $0$ if $x\in
  \{x_1,\ldots,x_n\}\setminus Y$.

  From the argument in the previous paragraph we see that for each $
  k$ such that $1\leq k \leq p$ we can find $\xi_k$ such that
  $\inp{\xi_k}{v_i} = w_i$ for $n_{k-1}<i\leq n_k$. In addition, we
  can find $F_k$ such that $F_k(x) = \xi_k$ for $x\in X_k$ and $F_k(x)
  = 0$ if $x\in \{x_1,\ldots,x_n\}\setminus X_k$. Hence,
\[
F_k(x)^*v_i =
\begin{cases}
  \cl{w_i} &\text{ if }x\in X_k\\
  0& \text{ if }x\in \{x_1,\ldots,x_n\}\setminus X_k
\end{cases}.
\] 
Hence, the function $F = F_1+\cdots+F_p$ has the property that
$F(x_i)^*v_i = w_i$ for $i=1,\ldots,n$.

Let $\cal J$ be the set of functions $F\in C(\alg A)$ such that $F(x_j)^*v_j
= 0$ for $j=1,\ldots,n$. We claim that $[\cal Jg]$ is the set of functions
in $\hs H_g\otimes \ell^2$ such that $\inp{f(x_j)}{v_j} = 0$. 

One inclusion is straightforward. If $F_mg \to h$, then
\[\inp{v_j}{h(x_j)} = \lim_{m\to \infty}\inp{v_j}{F_m(x_j)g(x_j)} =
\cl{g(x_j)}\lim_{m\to\infty}F_m(x_j)^*v_j = 0.\]

The reverse inclusion is a little more involved. Let $f\in
[\cal Jg]$. There exists a sequence $F_m \in C(\alg A)$ such that
$\norm{F_mg-f}\to 0$. We need to modify $F_m$ to a sequence
$\tilde{F_m}$ such that $\norm{\tilde{F_m}g-f}\to 0$ and
$\tilde{F_m}\in \cal J$. Once again let $X_1,\ldots,X_p$ be the partition
of the set $\{x_1,\ldots,x_n\}$ given by the equivalence relation of
point separation.

Given a function $a\in \alg A$ we define $a\otimes \xi $ to be the
function in $C(\alg A)$ given by $(a\otimes \xi) h = ah\otimes
\xi$. Let $\eta_{k,m}$ be the orthogonal projection of the vectors
$F_m(x_{n_k})$ onto the finite-dimensional subspace spanned by
$\{v_i\,:\, n_{k-1}<i\leq n_k\}$. Since, $\norm{F_mg-f}\to 0$, we have
that $\inp{F_m(x_i)g(x_i)-f(x_i)}{v_i} = \inp{F_m(x_i)}{v_i}\to 0$ for
$i=1,\ldots,n$. Hence, $\norm{\eta_{k,m}}\to 0$ as $m \to \infty$.

Let $G_m = \sum_{k=1}^p e_k\otimes \eta_{k,m}$, let $1\leq i\leq n$,
and let $l$ be such that $n_{l-1}<i\leq n_l$. We have,
\[
G_m(x_i)^*v_i = \sum_{k=1}^p (e_k\otimes \eta_{k,m})(x_i)^*v_i =
\sum_{k=1}^p \cl{e_k(x_i)}\inp{v_i}{\eta_{k,m}}.
\] 
Since $e_k|_{X_l}(x) = \delta_{k,l}$, the terms in the above sum for
$k\not=l$ are zero. Hence, the sum reduces to
$\inp{v_i}{\eta_{l,m}}$. Recall that $\eta_{l,m}$ is the projection of
$F(x_{n_l})$ onto the span of the vectors $\{v_i\,:\, n_{l-1}<i\leq
n_l\}$. Hence, $\inp{v_i}{\eta_{l,m}} = \inp{v_i}{F_m(x_{n_l})} =
F_m(x_{n_l})^*v_i$. However, functions in $\alg A$ are constant on the
sets $X_k$, which means $F_m(x_{n_l}) = F_m(x_i)$ and we get
$G_m(x_i)^*v_i = F_m(x_i)^*v_i$ for all $i=1,\ldots,n$. Hence, the
function $\tilde{F_m} = F_m - G_m\in \cal J$.

We have,
\[\norm{G_mg} = \norm{\sum_{k=1}^p
  (e_k\otimes F_m(x_{n_k}))g} \leq \sum_{k=1}^p\norm{e_kg\otimes
  \eta_{k,m}} =\sum_{k=1}^p \norm{e_kg}\norm{\eta_{k,m}}\to 0
\]
as
$m\to \infty$.

Finally, note that $f\in \hs H_g \otimes \ell^2$ is orthogonal to
$k_x\otimes v$ if and only if $\inp{f(x)}{v} = 0$. Hence, $(\hs H_g
\otimes \ell^2) \ominus [\cal J g]$ is the span of the vectors
$\{k_{x_i}\otimes v_i\,:\,i=1,\ldots,n\}= \hs K_g$.
\end{proof}

The distance formula, equation \eqref{dist2}, in Theorem~\ref{tdist2}
shows that we must be able to classify the cyclic subspaces of the
form $(\calA \otimes I)h$, and to do so, we begin with a simple
lemma. We will use the natural identification between $\hs H\otimes
\ell^2$ and the $\ell^2$-direct sum of $\hs H$, which we denote
$\oplus_{j=1}^\infty \hs H$.
\begin{lemma}
  Let $\alg A \subseteq B(\hs H)$ and let $h\in \hs H$, then the cyclic
  subspace of $\hs H$ generated by $C(\alg A)$ and $h$ is equal to
  $[\alg Ah]\otimes \ell^2$.
\end{lemma}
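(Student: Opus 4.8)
The plan is to work throughout with the identification of $\hs H\otimes\ell^2$ and the $\ell^2$-direct sum $\oplus_{j=1}^\infty\hs H$ that is recalled just before the statement: under it an element $T=(T_1,T_2,\ldots)^t\in C(\alg A)\subseteq B(\hs H,\hs H\otimes\ell^2)$ acts by $Th=(T_1h,T_2h,\ldots)$, and the target subspace $[\alg Ah]\otimes\ell^2$ corresponds to $\oplus_{j=1}^\infty[\alg Ah]$. Writing $\cal S$ for the closed linear span of $\{Th\,:\,T\in C(\alg A)\}$, I will prove the two inclusions $\cal S\subseteq[\alg Ah]\otimes\ell^2$ and $[\alg Ah]\otimes\ell^2\subseteq\cal S$ separately; both are short.

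For the first inclusion, if $T\in C(\alg A)$ then each component $T_jh$ lies in $\alg Ah\subseteq[\alg Ah]$, and since $T$ is bounded we have $\sum_{j=1}^\infty\norm{T_jh}^2=\norm{Th}^2<\infty$, so $Th\in\oplus_{j=1}^\infty[\alg Ah]$. Because $[\alg Ah]$ is closed in $\hs H$, the direct sum $\oplus_{j=1}^\infty[\alg Ah]$ is closed in $\oplus_{j=1}^\infty\hs H$ — norm convergence in the direct sum forces componentwise convergence — and hence it contains the closed span $\cal S$.

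For the reverse inclusion, I will use that $[\alg Ah]\otimes\ell^2$ is the closed linear span of the elementary tensors $(ah)\otimes e_j$ with $a\in\alg A$ and $\{e_j\}_{j=1}^\infty$ the standard orthonormal basis of $\ell^2$ (the vectors $ah$, $a\in\alg A$, are dense in $[\alg Ah]$ by definition). Fixing $a$ and $j$, let $T$ be the column operator with $a$ in the $j$-th entry and $0$ in every other entry. Since $\alg A$ is a subalgebra it contains $0$, so $T\in C(\alg A)$, and $T$ is bounded because only one entry is nonzero; moreover $Th=(ah)\otimes e_j$. Thus every such elementary tensor belongs to $\cal S$, so $[\alg Ah]\otimes\ell^2\subseteq\cal S$, and combining the inclusions proves the lemma.

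There is no genuine obstacle here; the only points worth a word are that $C(\alg A)$ is by definition a set of \emph{bounded} column operators, which is what legitimizes the identity $\sum_j\norm{T_jh}^2=\norm{Th}^2$ in the first inclusion, and that $0\in\alg A$, which is what makes the finitely supported columns used in the second inclusion genuine elements of $C(\alg A)$. The remaining ingredients — closedness of $\oplus_j[\alg Ah]$ and the fact that the elementary tensors $(ah)\otimes e_j$ span $[\alg Ah]\otimes\ell^2$ — are standard facts about Hilbert space direct sums and tensor products.
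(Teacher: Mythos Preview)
Your proof is correct and follows essentially the same approach as the paper's: both arguments establish one inclusion by noting that the elementary tensors $(ah)\otimes e_j$ arise from finitely supported columns in $C(\alg A)$, and the other by observing that any $Th$ has components in $\alg Ah$ with square-summable norms. Your write-up is a bit more explicit about the closure of $\oplus_j[\alg Ah]$ and the fact that $0\in\alg A$, but the underlying argument is the same.
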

\begin{proof}
  The space $C(\alg A)h$ is generated by elements of the form
  $ah\otimes e_j$ for $j\in \bbN$. Hence, $[\alg Ah]\otimes
  \ell^2\subseteq [C(\alg A)h]$. On the other hand, an element of
  $C(\alg A)h$ is of the form $\oplus_{j=1}^\infty a_jh$ where
  $\sum_{j=1}^\infty \norm{a_jh}^2$ is finite and hence is in $[\alg
  Ah]\otimes \ell^2$.
\end{proof}
\subsection{Proof of Theorem \ref{interp}}
We will now prove our main theorem, Theorem \ref{interp}, which is a
tangential interpolation result for $\wk$-closed subalgebras of
$H^\infty$. Let $\alg A\subseteq H^\infty$ be a unital \wk-closed
subalgebra of $H^\infty$.  

So far, we have assumed no additional structure on the algebras.  A
function $g \in H^2$ is called outer if and only if $H^\infty g$ is
dense in $H^2$. When $\alg A$ is subalgebra of $H^\infty$ the space
$(C(\alg A)\otimes I)h$, which is contained in $(H^2\otimes
\ell^2)\otimes \ell^2$, can be identified with a subspace of
$H^2\otimes \ell^2$ of the form $C(\alg A)g$ for some outer
function $g$. 
\begin{lemma}\label{outer}
  Let $\{h_i\}_{i=1}^{\infty}$ be a sequence in $H^2$ such that
  $\sum_{i=1}^\infty \norm{h_i}^2$ is finite. Then the function $p(t)
  = \sum_{i=1}^\infty \abs{h_i(t)}^2\in L^1(\bbT)$ and there exists an
  outer function $g\in H^2$ such that $p = \abs{g}^2$ a.e.~$\bbT$.
\end{lemma}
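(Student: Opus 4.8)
The plan is to reduce everything to the classical fact that a nonnegative function $p\in L^1(\bbT)$ whose logarithm is integrable is the modulus-squared of an outer function, and that conversely $\log\abs{g}\in L^1$ for every outer $g\in H^2$. First I would verify $p\in L^1$: since $h_i\in H^2$, each $\abs{h_i}^2\in L^1(\bbT)$ with $\int_\bbT \abs{h_i}^2\,dm = \norm{h_i}_2^2$, so by the monotone convergence theorem $\int_\bbT p\,dm = \sum_{i=1}^\infty \norm{h_i}_2^2 < \infty$. In particular $p(t)<\infty$ a.e.~and $p\geq 0$. To build the outer function I would apply the classical construction: if $\log p\in L^1(\bbT)$, set
\[
g(z) = \exp\left(\frac{1}{2\pi}\int_0^{2\pi} \frac{e^{i\theta}+z}{e^{i\theta}-z}\log\sqrt{p(e^{i\theta})}\,d\theta\right),
\]
which is outer, lies in $H^2$ (because $\abs{g}^2$ has the right boundary values and integrable logarithm forces $g\in H^p$ for the appropriate $p$, here $p=2$ since $\abs{g}^2 = p\in L^1$), and satisfies $\abs{g(e^{i\theta})}^2 = p(e^{i\theta})$ a.e.

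The one genuine obstacle is the lower bound on $\log p$, i.e.~showing $\log p\notin -L^1$ cannot happen — equivalently that $\int_\bbT \log p\,dm > -\infty$. Here is where the structure that the $h_i$ come from $H^2$ is essential, not just that $p\in L^1$. If all $h_i\equiv 0$ then $p\equiv 0$ and the statement is vacuous (or we take $g=0$, which one should note as a degenerate case); otherwise pick $i$ with $h_i\not\equiv 0$. Then $p(t)\geq \abs{h_i(t)}^2$ pointwise a.e., so $\log p \geq 2\log\abs{h_i}$, and since $h_i$ is a nonzero $H^2$ function, $\log\abs{h_i}\in L^1(\bbT)$ by the classical theorem that zeros of $H^p$ functions on $\bbT$ form a null set and $\log\abs{h_i}$ is integrable (this is exactly the statement that nonzero $H^p$ functions do not vanish on a set of positive measure and have integrable log-modulus). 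Combining, $\log p$ is bounded below by an $L^1$ function; it is bounded above by $\max(p-1,0)\in L^1$ via $\log x \leq x-1$ together with $\log p \leq 0$ on $\{p\leq 1\}$. Hence $\log p\in L^1(\bbT)$, and the outer function constructed above is the desired $g$.

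To finish I would record that $\norm{g}_2^2 = \int_\bbT \abs{g}^2\,dm = \int_\bbT p\,dm = \sum_i\norm{h_i}_2^2$ is finite, confirming $g\in H^2$, and that by construction $\abs{g}^2 = p$ a.e.~on $\bbT$, which is precisely the assertion. The only subtlety worth a sentence in the write-up is the identification that $\abs{g}^2$, as a boundary function, really equals $p$ and not merely has the same harmonic extension — this follows since outer functions built from a weight $w$ with $\log w\in L^1$ have nontangential boundary modulus equal to $w$ a.e., a standard fact from the theory of the Herglotz/Poisson representation. I do not expect any difficulty beyond invoking these classical $H^p$-theory facts in the right order; the content of the lemma is simply that the $H^2$ hypothesis on the $h_i$ supplies the integrability of $\log p$ that a bare $L^1$ hypothesis would not.
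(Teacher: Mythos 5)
Your proof is correct and follows essentially the same route as the paper: both arguments reduce to showing $\log p\in L^1(\bbT)$, bounding $(\log p)^+$ by the integrable function $p$ (via $\log x\le x$) and bounding $(\log p)^-$ below using $\log p\ge 2\log\abs{h_i}$ for a nonzero $h_i\in H^2$, whose log-modulus is integrable by classical $H^2$ theory. The only differences are cosmetic — the paper fixes $i=1$ and passes through the outer part of $h_1$, while you allow any nonzero $h_i$ and write out the Herglotz construction and the degenerate all-zero case explicitly.
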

\begin{proof}
The fact that $p\in L^1(\bbT)$ is a straightforward argument. 

It is well-known that a non-negative function $p\in L^1(\bbT)$ is of
the form $p=\abs{g}^2$ for some outer function if and only if the
function $\log p$ is summable. Let $p_m = \sum_{i=1}^m
\abs{h_i}^2$. If $u_1$ denotes the outer part of $h_1$, then $p_1 =
\abs{h_1}^2 = \abs{u_1}^2$. Hence $\log p_1\in L^1$. Since $\log p <
p$ it follows, since $p\in L^1$, that $(\log p)^+\in L^1$. On the
other hand we have $\log p \geq \log p_1$ and so $(\log p)^-\leq (\log
p_1)^-$. It follows that $(\log p)^-\in L^1$.
\end{proof}

\begin{lemma}\label{reduction}
  Let $\oplus_{i=1}^\infty h_i \in H^2\otimes \ell^2$ and let $g$ be
  as in Lemma~\ref{outer}. If $\alg A$ is a unital subalgebra of
  $H^\infty$, and $h = \sum_{j=1}^\infty h_j\otimes e_j$, then the map
  $U:[(C(\alg A)\otimes I)h]\to [C(\alg A)g]$ defined by
  $U[((M_F\otimes I)h)] = [M_Fg]$ is a unitary operator.
\end{lemma}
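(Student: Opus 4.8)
The goal is to show that the map $U[(M_F\otimes I)h] = [M_Fg]$ between the cyclic subspaces $[(C(\alg A)\otimes I)h]$ and $[C(\alg A)g]$ is a well-defined unitary. The natural strategy is to show that $U$ is a densely-defined isometry with dense range, and then extend by continuity; well-definedness follows from isometry. First I would fix notation: for $F\in C(\alg A)$ viewed as a column operator $(T_{f_1},T_{f_2},\ldots)^t$, the vector $(M_F\otimes I)h$ lives in $(H^2\otimes\ell^2)\otimes\ell^2$ and equals $\sum_{i,j} f_i h_j \otimes e_j^{(1)}\otimes e_i^{(2)}$, while $M_Fg = \sum_i f_i g\otimes e_i$ lives in $H^2\otimes\ell^2$. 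The elements $(M_F\otimes I)h$ as $F$ ranges over $C(\alg A)$ span a dense subset of the left-hand space by definition of the cyclic subspace, and similarly $M_Fg$ spans a dense subset of $[C(\alg A)g]$; so it suffices to check that $U$ preserves inner products on these spanning sets.

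The computational heart is the inner-product identity. Given $F=(f_i)$ and $\tilde F=(\tilde f_i)$ in $C(\alg A)$, I would compute
\[
\langle (M_F\otimes I)h, (M_{\tilde F}\otimes I)h\rangle
= \sum_{i,j} \langle f_i h_j, \tilde f_i h_j\rangle_{H^2}
= \sum_i \Big\langle f_i \Big(\sum_j |h_j|^2\Big), \tilde f_i\Big\rangle,
\]
where the middle step uses that $\langle f_i h_j, \tilde f_i h_j\rangle_{H^2} = \int_{\bbT} f_i\overline{\tilde f_i}\,|h_j|^2\,dm$ since $|h_j|$ is a function on the circle; summing over $j$ and invoking Lemma~\ref{outer} to write $\sum_j |h_j(t)|^2 = |g(t)|^2$ a.e., this becomes $\sum_i \int_{\bbT} f_i\overline{\tilde f_i}|g|^2\,dm = \sum_i \langle f_i g, \tilde f_i g\rangle_{H^2} = \langle M_F g, M_{\tilde F}g\rangle$. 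The interchange of the sum over $j$ with the integral is justified by monotone/dominated convergence since $\sum_j\|h_j\|^2<\infty$ and the $f_i$ are bounded, and likewise the sum over $i$ converges because $F$ is a bounded column operator. This shows $U$ is isometric on the spanning set, hence extends to a well-defined isometry between the two cyclic subspaces.

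Finally, surjectivity: the range of the extended $U$ is closed (isometric image of a complete space) and contains every $[M_Fg]$, which by construction span $[C(\alg A)g]$ densely; hence $U$ is onto, so unitary. The one point needing a little care — and the place I expect to be the main obstacle — is the passage from the pointwise-a.e.\ identity $\sum_j|h_j|^2 = |g|^2$ to the $H^2$ inner-product computation: one must be sure the rearrangements of the double series $\sum_{i,j}$ are legitimate and that boundary-value integrals genuinely represent the $H^2$ inner products $\langle f_i h_j, \tilde f_i h_j\rangle$. This is routine once one notes $f_i h_j, \tilde f_i h_j\in H^2$ so the inner product equals the boundary integral, and $f_i\in H^\infty$ controls the sum; but it is worth stating explicitly rather than glossing over.
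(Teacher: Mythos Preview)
Your proposal is correct and follows essentially the same route as the paper: both arguments reduce to the pointwise identity $\sum_j |h_j|^2 = |g|^2$ from Lemma~\ref{outer} to show that $U$ is isometric on the dense spanning set $\{(M_F\otimes I)h : F\in C(\alg A)\}$, with well-definedness and unitarity following. The only cosmetic difference is that the paper checks $\norm{(M_F\otimes I)h}^2 = \norm{M_Fg}^2$ directly rather than pairing two different $F,\tilde F$, which lets Tonelli handle all the interchanges for free since every term is nonnegative; your inner-product version is equivalent by polarization but, as you note, requires a word about dominated convergence.
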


\begin{proof}
  The map is clearly linear and surjective. We now show that $U$ is
  isometric, which also proves that $U$ is well-defined.

  We have,
\begin{align*}
  \norm{M_Fg}^2 &= \int \norm{Fg}^2 = \int \sum_{i=1}^\infty \abs{f_ig}^2  \\
  & = \int \sum_{i=1}^\infty \abs{f_i}^2 (\sum_{j=1}^\infty
  \abs{h_j}^2) =
  \sum_{j=1}^\infty \sum_{i=1}^\infty \int \abs{f_ih_j}^2 \\
  & = \sum_{j=1}^\infty \norm{M_Fh_j}^2 = \norm{(M_F\otimes I)h}^2.
\end{align*}
\end{proof}

We are now in a position to prove our main result, which is a
tangential interpolation theorem for subalgebras of $H^\infty$. Given
an outer function we will denote by $\hs H_g$ the closed subspace
$H^2$ generated by elements of the form $fg$, where $f\in \alg A$. We
will denote by $K_g$ the kernel function for this subspace. When $g$
is the constant function $g\equiv 1$ we will suppress the subscript
$g$.

\begin{proof}[Proof of Theorem \ref{interp}]
  We have seen that the existence of a solution $F$ of norm at
  most $\alpha$ implies $Q_g\geq 0$ for all $g$. Hence, it is the
  converse that concerns us.

  Let us assume that there is at least one solution, say $F_0$, which exists by Lemma~\ref{pointsep}. We view $\alg A$
  as a subalgebra of $B(\hs H)$. We define $\cal J$ to be the set of
  functions in $C(\calA)$ such that $F(x_j)^*v_j =0$ for
  $j=1,\ldots,n$. 

  Applying the distance formula~\eqref{dist2}, we get
  \[
  d(F_0,\cal J) = \sup\{\abs{\inp{(M_{F_0}\otimes I) h}{k}}\},
  \] 
  where $h\in \hs H\otimes \ell^2$, $k\in (\hs H\otimes \ell^2)\otimes
  \ell^2$, $\norm{h}=\norm{k}=1$ and $k\perp (\cal J\otimes I)h$. By
  projecting onto the subspace $[(C(\alg A)\otimes I)h]$ we can assume
  that $k\in [(C(\alg A)\otimes I)h]\ominus [(\cal J\otimes I)h]$. Let
  $U$ be the unitary map from Lemma~\ref{reduction} and let $g$ be the
  outer function such that $\abs{g}^2 = \sum_{i=1}^\infty
  \abs{h_i}^2$. We have,
  \begin{align*}
    \inp{M_{F_0}h}{k} = \inp{UM_{F_0}h}{Uk} = \inp{M_{F_0}g}{Uk} =
    \inp{M_{F_0}g}{k'},
  \end{align*}
  where $k' = Uk$. This gives
  \[
  d(F_0,\cal J) = \sup\{\abs{\inp{M_{F_0}g}{Uk}}\} =
  \sup\{\abs{\inp{M_{F_0}g}{k'}}\},
  \]
  where the supremum is over all outer functions $g\in H^2$ such that
  $\norm{g}=1$, $\abs{g}^2 = \sum_{j=1}^\infty \abs{h_i}^2$,
  $\norm{k'}\leq 1$, and $k'\in [C(\alg A)g]\ominus [\cal J g]$.

  Lemma~\ref{pointsep} shows that $[\cal J g]$ is the set of
  functions in $f\in \hs H_g\otimes \ell^2$ such that
  $\inp{f(x_j)}{v_j} = 0$ for $j=1,\dots,n$.  Since
  $\inp{f}{k_x\otimes \xi} = \inp{f(x)}{\xi}$, we see that $\hs K_g :=
  [C(\alg A)g]\ominus [\cal J g]$ is the span of the vectors
  $\{k^g_{x_i}\otimes v_i\,:\, i=1,\ldots,n\}$, where $k^g_x$ is the
  kernel function for $\hs H_g$ at the point $x$.

  Therefore,
  \[d(F_0,\cal J) \leq \sup \abs{\inp{g}{M_{F_0}^*k'}}\leq \sup
  \norm{M_{F_0}^*|_{\hs K_g}}\] where the supremum is taken over all
  outer functions $g$ as above.  If $F\in \cal J$, then
  $M_F^*(k^g_{x_i}\otimes v_i) = F(x_i)^*v_i k_{x_i}^g = 0$, and so
  $M_F^\ast|_{\cal K_g} = 0$. Hence, $\norm{F_0+F} \geq
  \norm{M_{F_0+F}^*|_{\hs K_g}} = \norm{M_{F_0}^*|_{\hs K_g}}$ from
  which it follows that $d(F_0,\cal J) = \sup_g \norm{M_{F_0}^*|_{\hs
      K_g}}$.

  The calculation leading to~\eqref{matpos3} shows that
  $\norm{M_{F_0}^*|_{\hs K_g}}\leq \alpha$ if and only if the matrix
  $Q_g\geq 0$. Hence, the positivity of all the matrices $Q_g$ implies
  that $d(F_0,\cal J)\leq \alpha$. This in turn guarantees the
  existence of a solution to the tangential problem of norm at most
  $\alpha$.
\end{proof}
The proof of Corollary~\ref{interpcor} is a consequence of the following
observation. 

If $h\in \hs H$, then there exists a sequence $f_n\in \calA$ such that
$\norm{f_n-h}_2 \to 0$. Hence, $\norm{\abs{f_n}^2 - \abs{h}^2}_1 \leq
\norm{f_n-h}_2\norm{f_n+h}_2 \to 0$ as $n\to \infty$. Therefore
$\abs{h}^2 \in L^1(\calA)$. If $(h_n)\in \hs H\otimes \ell^2$ is a
square summable sequence, then $\sum_{n=1}^\infty \abs{h_n}^2 \in
L^1(\calA)$. Hence, the absolute value of the outer function $g$ such
that $\abs{g}^2 = \sum_{n=1}^\infty \abs{h_n}^2$ is an element of
$L^2(\calA)$.

We have now established a tangential interpolation theorem for
\wk-closed subalgebras of $H^\infty$. Proposition~\ref{tangtop} shows
that the tangential interpolation result implies a T\"oplitz corona
theorem.  This result can be viewed as analogous to the results
obtained in Trent-Wick~\cite{TW} and
Douglas-Sarkar~\cite{DS}. However, the method of proof is different.

\subsection{Examples}
A better feeling for the result in Theorem~\ref{interp} can be
obtained by examining some special cases. We single out two classes of
examples of subalgebras of $H^\infty$.

\begin{enumerate}
\item Let $B$ be an inner function and consider the algebra
  $H^\infty_B = \bbC+B H^\infty$. Note that $B\in H^\infty_B$ and so
  $\cl{B}\in L^\infty(H^\infty_B)$. Therefore, $H^\infty =
  \cl{B}BH^\infty$ is contained in $L^\infty(H^\infty_B)$ and we see
  that $L^\infty(H^\infty_B) = L^\infty$.

We can also give a more explicit description of the subspaces $\hs
H_g$ in this case. Let $g$ be an outer function and let $v =
P_{H^2\ominus BH^2}g$. We claim that $\hs H_g = [v]\oplus BH^2$. We have, 
\[[(\bbC+BH^\infty)g] = \bbC g + B[H^\infty g] = \bbC v \oplus BH^2.\] 

\item Let $R$ be finite open Riemann surface. It is well-known that the
universal covering space for $R$ is the open unit disk $\bbD$. Let
$p:\bbD \to R$ denote the covering map and let $\Gamma$ denote the set
of deck transformations, that is, automorphisms $\gamma$ of the disk
such that $p\circ \gamma = p$.

The automorphisms in the group $\Gamma$ act on the disk and induce an
action on the space $H^\infty$ by composition. The set of fixed points
$H^\infty_\Gamma$ is naturally identified with the space of bounded
holomorphic functions on the Riemann surface. The automorphisms also
act by bounded linear maps on the space $H^p$ and $L^p$ and we use a
subscript $\Gamma$ to denote the associated set of fixed points.

In this case the algebra $L^\infty(H^\infty_\Gamma) =
L^\infty_\Gamma$.

The cyclic subspace $\hs H_g$ for an outer function $\abs{g}\in
L^2_\Gamma$ can be described in terms of character automorphic
function. A character of $\Gamma$ is a homomorphism from $\Gamma$ into
the circle group $\bbT$ and we denote the space of characters by
$\hat{\Gamma}$. A function $h\in H^2$ is called character automorphic
if there exists a character $\sigma\in \hat{\Gamma}$ such that $h\circ
\gamma = \sigma(\gamma) h$. The closure of $H^\infty_\Gamma$ in $H^2$
is the space $H^2_\Gamma$. If $g$ is an outer function such that
$\abs{g} \in L^2(\cal A)$, then there exists a character $\sigma\in
\hat{\Gamma}$ such that $g\circ \gamma = \sigma(\gamma)g$. In
addition, the space $\hs H_g = H^2_\sigma := \{f\in H^2\,:\, f\circ \gamma
= \sigma(\gamma)f\}$. A proof of these facts can be found in~\cite{Ra2}.

Given a character $\sigma$ we let $K^\sigma$ denote the reproducing kernel of
$H^2_\sigma$. We get that the kernels of
the character automorphic spaces $H^2_\sigma$, where $\sigma\in
\hat{\Gamma}$ have the tangential interpolation property for $H^\infty_\Gamma$.

In Section~\ref{similar} we will return to this example and show that
we can replace this family of matrix positivity conditions by a single
condition, at the expense of the optimal constant for the norm of a
solution to the tangential interpolation problem.
\end{enumerate}

We point out that the tangential interpolation theorem gives us a new
way to derive the Nevanlinna-Pick type interpolation results
in~\cite{Ra1,Ra2} for the examples above. 

\section{Applications of Theorem \ref{onegenerator}:  Similar Cyclic Modules}
\label{similar}
Theorem~\ref{interp} shows that the positivity of $M_FM_F^*\geq
\delta^2$ on a family of reproducing kernel Hilbert spaces is enough
to guarantee the existence of a function $G$ such that $FG = 1$ and
$\norm{M_G} \leq \delta^{-1}$. This theorem is analogous to the
results obtained in the work of~\cite{A,TW,DS}.

If we drop the requirement that the function $G$ have optimal norm,
then in some cases we can replace the family of conditions by a single
condition. Let $\calI_x$ denote the ideal of functions in $\calA$ such
that $f(x)=0$. We have already seen that $[\calI_x g]$ is a
codimension one subspace of $\hs H_g$ and that the orthogonal complement
of $\calI_xg$ is spanned by the kernel function $k^g_x$.

Now let us return to the setting where $\cal A$ is a unital \wk-closed
subalgebra of $H^\infty$ and the function $g$ is outer. We will
establish a tangential interpolation theorem where we replace the
family of conditions $Q_g\geq 0$ for all outer functions $g$ such that
$\abs{g}\in L^2(\cal A)$ by a single positivity condition. However, we
can not guarantee a solution of optimal norm. We will then apply our
result to the case of finite open Riemann surfaces.

Let $g,h$ be two outer functions and let $S:\hs H_g \to \hs H_h$ be a
bounded invertible operator that intertwines the action of $\calA$, that is,
such that $SM_f = M_fS$ for all $f\in \calA$. By taking adjoints we
see that $M_f^* S^* = S^* M_f^*$ for all $f\in \calA$. If $x\in \bbD$,
then $M_f^* k^g_x = \cl{f(x)}k^g_x$ and so we have $M_f^*S^*k^h_x =
\cl{f(x)}S^*k_x^h$ for all $x\in \bbD$ and $f\in \calA$. It follows
that the vector $S^*k^h_x$ is orthogonal to $\calI_x g$. Hence,
$S^*k^h_x = \cl{\phi(x)}k^g_x$ for all $x\in \bbD$, where $\phi$ is a
complex-valued function on the disk. In fact, $\phi$ is a multiplier
from $\hs H_g \to H_h$ and $S = M_\phi$.

Let $\hs H_1$, $\hs H_2$, $\hs K_1$ and $\hs K_2$ be $n$-dimensional
Hilbert spaces. If $A \in B(\hs H_1 ,\hs H_2)$ and $x_1,\ldots,x_n$ is
a basis for the space $\hs H_1$, then $\norm{A}\leq \alpha$ if and only if the
matrix $[\alpha^2\inp{x_j}{x_i}-\inp{Ax_j}{Ax_i}]$ is positive
semidefinite. Now let $S_i\in B(\hs H_i,\hs K_i)$ be a bounded
invertible transformation. If $\norm{A}\leq \alpha$, then
$\norm{S_2AS_1^{-1}}\leq \alpha \norm{S_2}\norm{S_1^{-1}}$.

Consider the special case where $\hs H_1 = \linspan\{k_{x_1}^g\otimes
v_1,\ldots,k_{x_n}^g\otimes v_n\}$, $\hs K_1 =
\linspan\{k_{x_1}^h\otimes v_1,\ldots,k_{x_n}^h\otimes v_n\}$, $\hs
H_2 = \linspan\{k_{x_1}^g,\ldots,k_{x_n}^g\}$, $\hs K_2 =
\linspan\{k_{x_1}^h,\ldots,k_{x_n}^h\}$. Let $A$ be the map
$A(k_{x_i}^g\otimes v_i) = \cl{w_i} k_{x_i}^g$, let $S = M_\phi$ be the
similarity between $\hs H_g$ and $\hs H_h$ described above, let $S_1 =
S^*\otimes I$, and let $S_2 = S^*$.

Note that if $F\in C(\cal A)$, with $F(x_i)^* v_i = \cl{w_i}$, then
$M_F^*|_{\hs H_1}$ is precisely $A$.

If $\norm{S}\norm{S^{-1}} = c$, then a straightforward computation
shows that $[(\alpha^2\inp{v_j}{v_i}-w_i\cl{w_j})K_g(x_i,x_j)]\geq 0$
if and only if $\norm{A}\leq \alpha$. This implies $\norm{S^*A(
  (S^*)^{-1}\otimes I)} \leq c\alpha$ which in turn implies that
$[(c^2\alpha^2\inp{v_j}{v_i}-w_i\cl{w_j})K_h(x_i,x_j)]\geq 0$.

We are now in a position to prove Theorem~\ref{onegenerator}.

\begin{proof}[Proof of Theorem \ref{onegenerator}]
  From the observations made above we see that the matrix positivity
  condition implies that
  $[(\alpha^2c^2\inp{v_j}{v_i}-w_i\cl{w_j})K^g(x_i,x_j)]\geq 0$. The
  result in (1) now follows from Theorem~\ref{interp}.

  The proof of (2) follows, as before, from the tangential
  interpolation theorem established in (1).
\end{proof}

We now provide an example of a class of subalgebras of $H^\infty$ to
which the above theorem applies. Recall that if $R$ is a finite open
Riemann surface, and $\Gamma$ is the associated group of deck
transformations acting on the disk, then the fixed-point algebra
$H^\infty_\Gamma$ is naturally identified with $H^\infty(R)$.

In this case the outer function $g$ has the property that there is a
character $\sigma$ such that $g\in H^2_\sigma$, and $\hs H_g =
H^2_\sigma$.  We will establish the existence of a similarity
$S_\sigma = M_{\phi_\sigma}$ between $H^2_\Gamma$ and $H^2_\sigma$ and
show that there is a uniform bound on
$\norm{S_\sigma}\|S_\sigma^{-1}\|$. This result generalizes a theorem
of Ball~\cite{B} from the setting of multiply-connected domains to
Riemann surfaces.

\begin{prop}\label{similarity}
  Let $\Gamma$ be a the group of deck transformations associated to a
  finite open Riemann surface. For each $\sigma\in \hat{\Gamma}$,
  there exists a bounded invertible function $\phi_\sigma$ such that
  $\phi_\sigma H^2_\Gamma = H^2_\sigma$. There is a constant $\beta$,
  independent of $\sigma$ such that $\beta^{-1}\leq
  \abs{\phi_\sigma}\leq \beta$.
\end{prop}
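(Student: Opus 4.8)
The plan is to construct $\phi_\sigma$ explicitly from a (suitably normalized) minimal inner-type function carrying the character $\sigma$, and then to extract uniform two-sided bounds from the compactness of the character group $\hat\Gamma$ together with a continuity argument.

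First I would recall the structure of $H^2_\sigma$ for a finite open Riemann surface: for each character $\sigma\in\hat\Gamma$ there is a character-automorphic inner function, the Blaschke-type product $b_\sigma$ whose zeros form a $\Gamma$-orbit, satisfying $b_\sigma\circ\gamma = \sigma(\gamma)b_\sigma$. (Since $\Gamma$ is finitely generated and the surface is finite, these products converge and depend continuously on $\sigma$ in an appropriate sense.) The naive guess $\phi_\sigma = b_\sigma$ fails because $b_\sigma$ vanishes, so it is not invertible; instead the right object is the \emph{outer} character-automorphic function with the prescribed character, or more precisely one takes $\phi_\sigma$ to be an outer function whose modulus on $\bbT$ equals that of a fixed character-automorphic measure's density. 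Concretely, fix a point $z_0$ with $p(z_0)$ not a branch point; the function $\phi_\sigma$ should be the outer function normalized by $\phi_\sigma(z_0)>0$ whose boundary modulus realizes the Szeg\H{o} kernel comparison between $H^2_\Gamma$ and $H^2_\sigma$. Then $\phi_\sigma$ is bounded, bounded away from zero, and $M_{\phi_\sigma}$ intertwines the $\Gamma$-actions, so $\phi_\sigma H^2_\Gamma = H^2_\sigma$.

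Next, to get the uniform bound, I would argue as follows. The assignment $\sigma\mapsto \phi_\sigma$ can be chosen so that $\sigma\mapsto \log|\phi_\sigma|$ depends continuously on $\sigma\in\hat\Gamma$ in, say, the $L^1(\bbT)$ norm (this uses that the harmonic-measure/Green's-function data on the finite surface vary real-analytically with the character, a standard fact for finite bordered surfaces). Since $\hat\Gamma$ is a compact group (a finite-dimensional torus, as $\Gamma$ is finitely generated of rank equal to the number of handles plus boundary components minus one), the function $\sigma\mapsto (\norm{\phi_\sigma}_\infty, \norm{\phi_\sigma^{-1}}_\infty)$ is upper semicontinuous on a compact set and hence bounded. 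Taking $\beta$ to be the maximum over $\hat\Gamma$ of $\max(\norm{\phi_\sigma}_\infty,\norm{\phi_\sigma^{-1}}_\infty)$ gives $\beta^{-1}\le |\phi_\sigma|\le\beta$ for all $\sigma$. Combined with Theorem~\ref{onegenerator}, where one sets $S_g = M_{\phi_\sigma}$ and $c=\beta^2$, this yields the stated corona and interpolation consequences for $H^\infty(R)$.

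The main obstacle is establishing the \emph{uniformity} — that the norms $\norm{\phi_\sigma}_\infty$ and $\norm{\phi_\sigma^{-1}}_\infty$ do not blow up as $\sigma$ ranges over $\hat\Gamma$. Pointwise existence of the similarity for each fixed $\sigma$ is essentially classical (this is the content of Ball's theorem on multiply-connected domains, and Forelli/Pommerenke-type results); the new content here is the $\sigma$-independent constant. The cleanest route is the compactness-plus-continuity argument above, but it requires care in choosing the normalization of $\phi_\sigma$ so that the dependence on $\sigma$ is genuinely continuous (a bad choice of branch or normalization could introduce discontinuities as $\sigma$ winds around $\hat\Gamma$). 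An alternative, more hands-on route is to write $\log|\phi_\sigma|$ explicitly as an integral of the Green's function of $R$ against the difference of two character-automorphic harmonic measures and bound that integral uniformly using that the Green's function and harmonic measures of the \emph{fixed} surface $R$ are fixed data; I would pursue whichever of these is shortest given the tools already cited in~\cite{Ra2}.
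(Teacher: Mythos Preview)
Your proposal has a real gap in the uniformity step. You argue that $\sigma\mapsto \log\abs{\phi_\sigma}$ is continuous in $L^1(\bbT)$ and then assert that $\sigma\mapsto\norm{\phi_\sigma}_\infty$ is upper semicontinuous, hence bounded on the compact torus $\hat\Gamma$. But $L^1$-continuity of $\log\abs{\phi_\sigma}$ gives no control over $\norm{\log\abs{\phi_\sigma}}_\infty$: one can have $f_\sigma\to 0$ in $L^1$ while $\norm{f_\sigma}_\infty\to\infty$, and then $\norm{e^{f_\sigma}}_\infty\to\infty$ as well. So compactness of $\hat\Gamma$ combined with continuity in a topology as weak as $L^1$ cannot deliver the uniform $L^\infty$ bound you need. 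Separately, your description of $\phi_\sigma$ (``the outer function whose boundary modulus realizes the Szeg\H{o} kernel comparison'') is not concrete enough even to verify that $\abs{\phi_\sigma}$ is bounded above and below for a single fixed $\sigma$, let alone to check any continuity in $\sigma$; you are effectively assuming the pointwise conclusion and then trying to upgrade it, but the upgrade step fails.

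The paper's proof is essentially your ``alternative, more hands-on route,'' carried out via Forelli's construction rather than Green's functions. Forelli supplies fixed nonnegative functions $v_1,\dots,v_m\in L^\infty_\Gamma$ (one per generator $\gamma_k$ of $\Gamma$), linearly independent and orthogonal to $H^2_\Gamma\oplus\overline{H^2_{\Gamma,0}}$, with the property that for real $f\in L^2_\Gamma$ the harmonic conjugate satisfies $f^*\circ\gamma_k-f^*=\int f\,v_k$. Given $\sigma$, one writes $\sigma(\gamma_k)=e^{i\theta_k}$ with $\theta_k\in[0,2\pi)$, solves the fixed real linear system $Vc=(\theta_1,\dots,\theta_m)^t$ where $V=[\inp{v_l}{v_k}]$ is the invertible Gram matrix, sets $f=\sum_k c_k v_k$, and takes $\phi_\sigma=\exp(f+if^*)$. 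Then $\phi_\sigma\in H^\infty_\sigma$ and $\abs{\phi_\sigma}=e^f$ with $\norm{f}_\infty\le\norm{c}_1\max_k\norm{v_k}_\infty$; since $\theta$ ranges over a bounded set and $V^{-1}$ is fixed, $\norm{c}_1$ is bounded independently of $\sigma$, and the uniform two-sided bound on $\abs{\phi_\sigma}$ is immediate. The structural point is that $\log\abs{\phi_\sigma}$ is forced to lie in the \emph{fixed finite-dimensional} subspace $\linspan\{v_1,\dots,v_m\}\subset L^\infty_\Gamma$ with uniformly bounded coefficients, which is exactly the explicit $L^\infty$ control that your abstract compactness argument cannot supply.
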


We will need two results of Forelli~\cite{F}.

\begin{thm}[Forelli] Let $\Gamma$ be the group of deck transformations
  associated to a finite open Riemann surface $R$ of genus $m$. Let
  $\gamma_1,\ldots,\gamma_m$ denote the generators of $\Gamma$. There
  exist $m$ vectors $v_1,\ldots,v_m \in L^\infty_\Gamma$ such that
  $v_i$ is non-negative, and $v_i$ is orthogonal to $H^2_\Gamma\oplus
  \cl{H^2_{\Gamma,0}}$. In addition, $v_1,\ldots,v_m$ are linearly
  independent.
\end{thm}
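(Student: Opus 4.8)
The plan is to push the whole picture from $\bbD$ and $\bbT$ down to the compact bordered surface $\cl R$ and its Schottky double $\widehat R$ --- a closed Riemann surface whose genus equals the rank $m$ of the free group $\Gamma$ (equivalently the first Betti number of $R$) --- and then to recognise the orthogonal complement $N:=L^2_\Gamma\ominus\bigl(H^2_\Gamma\oplus\cl{H^2_{\Gamma,0}}\bigr)$ as a space of boundary data of holomorphic differentials on $\widehat R$. Because on $H^2$ and $L^2$ the group acts isometrically (carrying the automorphy weight $|\gamma'|^{1/2}$, the standard normalisation in this circle of ideas), a $\Gamma$-semi-invariant $L^2$ function corresponds to a square-integrable half-order density on $\partial R$ and $H^2_\Gamma$ to the Hardy class of holomorphic half-differentials on $R$; in particular the constant function is \emph{not} a member of $H^2_\Gamma$, so nothing a priori forces a non-negative vector orthogonal to all of $H^2_\Gamma$ to vanish. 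I would also record at the outset that $N$ is invariant under complex conjugation and hence possesses a real basis; the linear-independence assertion in the theorem will then follow once $m$ independent non-negative vectors in $N$ are exhibited, the independence being checked against the non-degeneracy of the associated period matrix.

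First I would establish that the gap is exactly $m$-dimensional. One direction is concrete and I would do it by hand: if $\eta$ is a holomorphic abelian differential on $\widehat R$ that is real with respect to the anti-holomorphic reflection fixing $\partial R$, then Cauchy's theorem on $\cl R$ (whose boundary bounds) shows that the boundary restriction of $\eta$ annihilates $H^2_\Gamma$, and reality along $\partial R$ shows it annihilates $\cl{H^2_{\Gamma,0}}$ as well; since a holomorphic differential vanishing along a boundary curve is identically zero, restriction is injective on the $m$-dimensional real space of such $\eta$, giving the lower bound on $\dim N$. The reverse inequality is the classical fact that on a finite Riemann surface the analytic and co-analytic parts of $L^2(\partial R)$ fail to fill it out by exactly the first Betti number; I would obtain it from Serre duality / Riemann--Roch on $\widehat R$, or, for planar $R$, from a direct Laurent analysis on the annular ends.

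The substantive step, and the one I expect to be the obstacle, is producing the \emph{non-negative} representatives. The naive candidates --- the boundary normal derivatives $\partial\omega_i/\partial n$ of the harmonic measures $\omega_i$ of the boundary components --- do lie in $N$ (Green's identity rewrites $\int_{\partial R}h\,{}^{*}d\omega_i$ as a Cauchy integral of a holomorphic differential, which vanishes), and they span it, but each is sign-indefinite: strictly positive on its own boundary component and strictly negative on the others by the Hopf lemma. Forelli's device is to assemble, out of the $\omega_i$ together with the conjugate-period data around the handles --- equivalently, out of a basis of real abelian differentials on $\widehat R$ --- half-order densities $v_1,\dots,v_m\in L^\infty_\Gamma$ that are genuinely pointwise non-negative; the mechanism exploits the positive-definiteness of the period matrix $\bigl[\int_{\partial R}p_ip_j\bigr]$ (a Riemann bilinear relation) and the freedom granted by the fact that no positivity constraint is imposed by constants under the correct normalisation. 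For these $v_i$ one then checks orthogonality to $H^2_\Gamma$ by the same Cauchy-theorem argument, orthogonality to $\cl{H^2_{\Gamma,0}}$ from their reality, boundedness from analyticity of $\partial R$, and independence from non-degeneracy of the period matrix. Transporting the $v_i$ back through the covering $\bbD\to R$ gives the asserted vectors. The crux, to say it once more, is reconciling non-negativity with orthogonality to the \emph{full} Hardy space $H^2_\Gamma$ rather than merely to $H^2_{\Gamma,0}$; this is the real content of Forelli's lemma, and it fails for the obvious choices.
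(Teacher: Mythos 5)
First, a point of reference: the paper offers no proof of this statement at all -- it is quoted verbatim from Forelli~\cite{F} as an imported result -- so there is no in-paper argument to compare yours against; your proposal has to stand on its own. It does not. What you have written is a correct and well-informed \emph{frame} for the problem: the identification of the defect space $N=L^2_\Gamma\ominus\bigl(H^2_\Gamma\oplus\cl{H^2_{\Gamma,0}}\bigr)$ with boundary data of differentials on the Schottky double, the count $\dim N=m$ via Cauchy's theorem and Riemann--Roch, and the observation that the normal derivatives of harmonic measures span $N$ but are sign-indefinite. But the entire content of Forelli's theorem is the production of \emph{pointwise non-negative} spanning vectors, and at exactly that step you write ``Forelli's device is to assemble \dots the mechanism exploits the positive-definiteness of the period matrix and the freedom granted by the fact that no positivity constraint is imposed by constants'' -- which is a description of where a proof would live, not a proof. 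You even name this step as ``the obstacle'' and ``the real content of Forelli's lemma.'' A proposal that defers its only nontrivial step to the very source it is meant to replace has a gap precisely where the theorem is.

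There is a second, more structural problem. Under the paper's own conventions ($\Gamma$ acts on $H^p$ by unweighted composition, so $1\in H^2_\Gamma$, and orthogonality is in $L^2(\bbT,dm)$), any $v\geq 0$ with $v\perp H^2_\Gamma$ satisfies $\int_\bbT v\,dm=\inp{v}{1}=0$ and hence $v=0$ a.e.; indeed every nonzero element of $N$ has mean zero and therefore changes sign. So the statement as literally written admits no nonzero non-negative solutions. You sense this -- it is why you insert the automorphy weight $\abs{\gamma'}^{1/2}$ and declare that ``the constant function is not a member of $H^2_\Gamma$'' -- but silently changing the definition of $H^2_\Gamma$ means you are proving a different statement, not the one in the paper. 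A correct treatment must confront this head-on: either adopt the weighted (half-order density) convention explicitly throughout, and then also reconcile it with the second quoted Forelli theorem (whose period formula, applied to $f\equiv 1$, again forces $\int v_i\,dm=0$), or replace the orthogonality condition by orthogonality to $H^2_{\Gamma,0}\oplus\cl{H^2_{\Gamma,0}}$, in which case the constant function restores exactly the positive direction needed to shift the sign-indefinite period vectors into the positive cone. As it stands, your proposal neither constructs the $v_i$ nor pins down a version of the statement for which such a construction is possible.
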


If $f$ is a real-valued function in $L^2$, then its conjugate function
$f^*$ is the unique real-valued function in $L^2$ such that $f+if^*\in H^2$ and
$\int f^* = 0$. 

\begin{thm}[Forelli]
  Let $f\in L^2_\Gamma$ and let $f^*$ denote the function conjugate to
  $f$, then $f^*\circ \gamma_i - f$ is constant, and the constant is
  given by $\int fv_i$.
\end{thm}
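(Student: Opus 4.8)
The plan is to move the statement to the open disk via the Poisson integral, run the classical harmonic‑conjugate argument to manufacture the constants, and only afterwards identify the constant with $\int f v_i$ by a duality/period argument. (I read the conclusion, as surely intended, as ``$f^\ast\circ\gamma_i-f^\ast$ is constant, equal to $\int f v_i$''.) First I would let $u$ be the Poisson extension of $f$ to $\bbD$. Each generator $\gamma_i$ is a conformal automorphism of $\bbD$ extending to a homeomorphism of $\cl{\bbD}$ carrying $\bbT$ onto $\bbT$, so $u\circ\gamma_i$ is harmonic on $\bbD$ with boundary values $f\circ\gamma_i=f$; uniqueness of the Poisson extension in the harmonic Hardy class gives $u\circ\gamma_i=u$, i.e. $u$ is $\Gamma$‑invariant. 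Let $\tilde u$ be the harmonic conjugate of $u$ with $\tilde u(0)=0$ and set $F=u+i\tilde u$. By the M.~Riesz theorem $F\in H^2$, and since $\operatorname{Re}F=u$ has boundary values $f$ while $\int\tilde u\,dm=\tilde u(0)=0$, the boundary function of $F$ is exactly $f+if^\ast$. Now $F\circ\gamma_i$ is holomorphic on $\bbD$ with $\operatorname{Re}(F\circ\gamma_i)=u\circ\gamma_i=u=\operatorname{Re}F$; a holomorphic function on the connected set $\bbD$ with identically vanishing real part is a purely imaginary constant, so $F\circ\gamma_i-F=ic_i$ with $c_i\in\bbR$, i.e. $\tilde u\circ\gamma_i-\tilde u\equiv c_i$. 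Passing to nontangential boundary limits (which $\gamma_i$ respects, being the restriction of a M\"obius map) yields $f^\ast\circ\gamma_i-f^\ast=c_i$ a.e. on $\bbT$, which is the first assertion; note $c_i=\tilde u(\gamma_i(0))$, so $\abs{c_i}\lesssim\norm{f}_2$ and $f\mapsto c_i$ is a bounded real‑linear functional on $L^2_\Gamma$.

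It remains to identify $c_i$. Since $c_i$ is unchanged when a constant is added to $f$, I may assume $\int f=0$; write $L^2_{\Gamma,0}$ for the mean‑zero subspace. If $f=\operatorname{Re}h$ with $h\in H^2_\Gamma$, then the Poisson extension of $f$ is $\operatorname{Re}h$, so $F=h+(\text{imaginary constant})$ is $\Gamma$‑invariant and $c_j(f)=0$ for all $j$; conversely, $c_j(f)=0$ for every generator $\gamma_j$ forces $F\circ\gamma=F$ on the subgroup they generate, hence on all of $\Gamma$, so $F\in H^2_\Gamma$ and $f\in\operatorname{Re}H^2_{\Gamma,0}$. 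Thus the joint kernel of $c_1,\dots,c_m$ on $L^2_{\Gamma,0}$ is exactly $\operatorname{Re}H^2_{\Gamma,0}$, so this subspace has real codimension at most $m$. On the other hand, Forelli's first theorem supplies $m$ linearly independent functionals $f\mapsto\int f v_j$, each annihilating $\operatorname{Re}H^2_{\Gamma,0}$ (the $v_j$ are real and orthogonal to $H^2_{\Gamma,0}$ and its conjugate); hence the codimension is exactly $m$, the annihilator of $\operatorname{Re}H^2_{\Gamma,0}$ is spanned by $\{\int(\cdot)v_j\}$, and therefore $c_i=\sum_j a_{ij}\int(\cdot)v_j$ for a real matrix $(a_{ij})$. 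To check $(a_{ij})=I$ I would test both sides on a basis of $L^2_{\Gamma,0}$ built from (lifts of) the harmonic measures $\omega_k$ of the boundary components $C_k$ of $R=\bbD/\Gamma$: Green's identity exhibits $c_i(\omega_k)$ as the flux of the harmonic extension of $\omega_k$ across $C_i$ (equivalently a conjugate‑period matrix entry), while $\int\omega_k v_i\,dm=\int_{\partial R}\omega_k v_i\,d\omega_0$ because the boundary identification pushes $dm$ forward to the harmonic measure $\omega_0$ at the base point; matching these forces $a_{ij}=\delta_{ij}$.

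The hard part is precisely this last matching. The three properties quoted from Forelli's first theorem (non‑negativity, orthogonality to $H^2_{\Gamma,0}$ and its conjugate, linear independence) pin down the \emph{span} of the relevant functionals but not the individual $v_i$, so identifying $(a_{ij})$ with the identity genuinely uses the way the $v_i$ are constructed in Forelli's paper — as the non‑negative bounded Radon--Nikodym densities, with respect to harmonic measure, of the period/flux functionals attached to the chosen generators $\gamma_1,\dots,\gamma_m$ — rather than merely their abstract properties. A secondary point, and the reason I pass at once to $\int f=0$, is that on all of $L^2_\Gamma$ the two functionals actually differ by the term $(\int f)(\int v_i)$ (note $\int v_i\neq0$ since $v_i\geq0$ is nonzero), so the clean identity $c_i=\int f v_i$ lives naturally on the mean‑zero subspace, or must be read together with the corresponding normalization of the $v_i$.
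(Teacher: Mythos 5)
This statement is quoted from Forelli~\cite{F}; the paper itself supplies no proof, so there is nothing internal to compare against. Judged on its own terms, your argument for the first assertion is complete and correct: Poisson-extend $f$ to a $\Gamma$-invariant harmonic $u$, form $F=u+i\tilde u$, observe that $F\circ\gamma_i-F$ is holomorphic with vanishing real part and hence an imaginary constant $ic_i$, and pass to nontangential boundary values. You also correctly flag that the displayed conclusion must read $f^*\circ\gamma_i-f^*$ (not $f^*\circ\gamma_i-f$), and your observation that the literal identity $c_i=\int fv_i$ fails on constants (since $f\equiv c$ gives $f^*=0$ while $\int v_i>0$ for a nonzero non-negative $v_i$) is a genuine and worthwhile consistency check on the paper's loose quotation of Forelli's two theorems.

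The gap is the one you name yourself: the identification $c_i=\int fv_i$, i.e.\ that the matrix $(a_{ij})$ in $c_i=\sum_j a_{ij}\int(\cdot)v_j$ is the identity. Your duality argument only shows that the period functionals $c_1,\dots,c_m$ and the functionals $\int(\cdot)v_j$ span the same $m$-dimensional annihilator of $\operatorname{Re}H^2_{\Gamma,0}$ inside $(L^2_{\Gamma,0})^*$, and the three properties of the $v_i$ quoted in the paper (non-negativity, orthogonality, independence) are invariant under many changes of basis (e.g.\ replacing $v_1$ by $v_1+v_2$), so they cannot single out $(a_{ij})=I$. Even the ``at least $m$'' half of your codimension count is not quite free: you need that no nontrivial combination $\sum a_jv_j$ is constant, which the quoted properties do not obviously exclude. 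The proposed repair --- testing against harmonic measures and computing fluxes via Green's identity --- is the right idea but is exactly the content of Forelli's construction of the $v_i$ as densities of the conjugate-period functionals; without importing that construction the second assertion is not provable from what the paper states, and your write-up leaves it as an honest sketch rather than a proof. So: first half proved, second half reduced correctly but not closed.
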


Now we present the proof of Proposition~\ref{similarity}.

\begin{proof}[Proof of Proposition \ref{similarity}]
  Let $\gamma_1,\ldots,\gamma_m$ be a minimal set of generators of the
  group $\Gamma$. We let $\sigma_k = \sigma(\gamma_k)$. Since $\sigma$
  is a character, there exists $\theta_1,\ldots,\theta_m \in [0,2\pi)$
  such that $\sigma_k = e^{i\theta_k}$. 

  Let $v_{k,l} = \inp{v_l}{v_k}$ for $k,l=1,\ldots,m$.  Note that
  $v_l^\ast\circ \gamma_k = v_l^* + \int v_kv_l = v_l^* +
  \inp{v_k}{v_l}$. Since $v_1,\ldots,v_m$ are linearly independent the
  matrix $V = [v_{k,l}]$ is invertible. Let $c = (c_1,\ldots,c_m)^t$
  be the unique vector such that $ Vc =
  (\theta_1,\ldots,\theta_m)^t$. Since the entries of $V$ are real we
  see that $V^{-1}$ has real entries. Hence, the vector $c\in \bbR^m$.

  Let $f = \sum_{k=1}^m c_k v_k$. Note that $f$ is a real-valued
  element of $L^\infty$. Following the construction in~\cite{F} we let
  $\phi_\sigma = \exp(f+if^*)$. Since $f$ is real-valued we see that
  $\abs{\phi_\sigma} = \exp(f)$ and so $\phi_\sigma$ is bounded. Now,
  \begin{align*}
  \phi_\sigma\circ \gamma_k &= \exp\bigg(f+if^* + i\sum_{l=1}^m c_lv_{k,l} \bigg) \\
  &=  \exp\bigg(i\sum_{l=1}^m c_l v_{k,l}\bigg)\phi_\sigma = \exp(i\theta_k)\phi_\sigma =
  \sigma(\gamma_k)\phi_\sigma.
  \end{align*} Hence, $\phi_\sigma\in H^\infty_\sigma$.

  We have $\abs{\sum_{k=1}^m c_k v_k} \leq
  \max_{k=1,\ldots,m}\norm{v_k}_\infty \norm{c}_1$. Since
  $\theta_1,\ldots,\theta_m\in [0,2 \pi)$ there exists a constant $K$,
  that does not depend on $\sigma$, such that $\norm{c}_1\leq
  K$. Hence, there is a constant $K'$ such that $e^{-K'}\leq
  \abs{\phi_\sigma} \leq e^{K'}$ for all $\sigma\in \hat{\Gamma}$.
\end{proof}
\begin{bibdiv}
\begin{biblist}

\bib{aglmcbook}{book}{
   author={Agler, Jim},
   author={McCarthy, John E.},
   title={Pick interpolation and Hilbert function spaces},
   series={Graduate Studies in Mathematics},
   volume={44},
   publisher={American Mathematical Society},
   place={Providence, RI},
   date={2002},
   pages={xx+308},
   isbn={0-8218-2898-3},
}

\bib{aglmc}{article}{
   author={Agler, Jim},
   author={McCarthy, John E.},
   title={Nevanlinna-Pick interpolation on the bidisk},
   journal={J. Reine Angew. Math.},
   volume={506},
   date={1999},
   pages={191--204},
   issn={0075-4102},
}

\bib{aglmc2}{article}{
  author={Agler, Jim}, 
  author={McCarthy, John E.},
  title={What Hilbert spaces can tell us about bounded functions in the bidisk},
  eprint={http://www.arxiv.org/0901.0907},  
}

\bib{A}{article}{
   author={Amar, E.},
   title={On the To\"eplitz corona problem},
   journal={Publ. Mat.},
   volume={47},
   date={2003},
   number={2},
   pages={489--496}
}

\bib{Ar}{article}{
   author={Aronszajn, N.},
   title={Theory of reproducing kernels},
   journal={Trans. Amer. Math. Soc.},
   volume={68},
   date={1950},
   pages={337--404},
   issn={0002-9947},
}

\bib{Arveson}{article}{
   author={Arveson, William},
   title={Interpolation problems in nest algebras},
   journal={J. Functional Analysis},
   volume={20},
   date={1975},
   number={3},
   pages={208--233},
}

\bib{B}{article}{
   author={Ball, Joseph A.},
   title={Interpolation problems and Toeplitz operators on multiply
   connected domains},
   journal={Integral Equations Operator Theory},
   volume={4},
   date={1981},
   number={2},
   pages={172--184}
}

\bib{BT}{article}{
   author={Ball, Joseph A.},
   author={Trent, Tavan T.},
   title={Unitary colligations, reproducing kernel Hilbert spaces, and
   Nevanlinna-Pick interpolation in several variables},
   journal={J. Funct. Anal.},
   volume={157},
   date={1998},
   number={1},
   pages={1--61},
   issn={0022-1236},
}

\bib{C}{article}{
   author={Carleson, Lennart},
   title={Interpolations by bounded analytic functions and the corona
   problem},
   journal={Ann. of Math. (2)},
   volume={76},
   date={1962},
   pages={547--559}
}

\bib{DS}{article}{
  author={Douglas, Ronald G.},
  author={Sarkar, Jaydeb},
  title={Some Remarks on the Toeplitz Corona problem},
  journal={Proceedings of CRM},
  date={2009}
}

\bib{F}{article}{
   author={Forelli, Frank},
   title={Bounded holomorphic functions and projections},
   journal={Illinois J. Math.},
   volume={10},
   date={1966},
   pages={367--380},
   issn={0019-2082}
}

\bib{Mc}{article}{
   author={McCullough, Scott},
   title={Nevanlinna-Pick type interpolation in a dual algebra},
   journal={J. Funct. Anal.},
   volume={135},
   date={1996},
   number={1},
   pages={93--131}
}

\bib{Ra1}{article}{
   author={Raghupathi, Mrinal},
   title={Nevanlinna-Pick interpolation for $\Bbb C+BH\sp \infty$},
   journal={Integral Equations Operator Theory},
   volume={63},
   date={2009},
   number={1}
}

\bib{Ra2}{article}{
   author={Raghupathi, Mrinal},
   title={Abrahamse's interpolation theorem and Fuchsian groups},
   journal={J. Math. Anal. Appl.},
   volume={355},
   date={2009},
   number={1},
   pages={258--276}
}
	
\bib{Sc}{article}{
   author={Schubert, C. F.},
   title={The corona theorem as an operator theorem},
   journal={Proc. Amer. Math. Soc.},
   volume={69},
   date={1978},
   number={1},
   pages={73--76}
}

\bib{TW}{article}{
  author={Trent, Tavan T.},
  author={Wick, Brett D.},
  title={Toeplitz Corona Theorems for the Polydisk and the Unit Ball},
  journal={Complex Anal. Oper. Theory},
  volume={3},
  date={2009},
  number={3},
  pages={729--738}
}

\end{biblist}
\end{bibdiv}

\end{document}